\newtheorem{thm}{Theorem}
\newtheorem{cor}[thm]{Corollary}
\newtheorem{lem}[thm]{Lemma}
\newtheorem{prop}[thm]{Proposition}
\theoremstyle{remark}
\newtheorem{rem}[thm]{Remark}
\newtheorem{ex}[thm]{Example}
\theoremstyle{definition}
\newtheorem{defin}[thm]{Definition}
\begin{document}

\title[Fractional $h$-difference equations]{Fractional
$h$-difference equations\\
arising from the calculus of variations}

\author[R. A. C. Ferreira]{Rui A. C. Ferreira}

\address{Department of Mathematics,
Faculty of Engineering and Natural Sciences,\newline
Lusophone University of Humanities and Technologies,
1749-024 Lisbon, Portugal}

\email{ruiacferreira@ua.pt}


\author[D. F. M. Torres]{Delfim F. M. Torres}

\address{Department of Mathematics,
University of Aveiro, 3810-193 Aveiro, Portugal}

\email{delfim@ua.pt}


\thanks{Submitted 15-Aug-2010; revised 16-Jan-2011 and 30-Jan-2011;
accepted 31-Jan-2011; for publication in 
\emph{Applicable Analysis and Discrete Mathematics}.}


\subjclass[2000]{Primary 39A12; Secondary 49J05, 49K05}

\keywords{Fractional discrete calculus,
fractional difference calculus of variations,
Euler--Lagrange equations,
explicit solutions}


\begin{abstract}
The recent theory of fractional $h$-difference equations introduced
in [{\sc N. R. O. Bastos, R. A. C. Ferreira, D. F. M. Torres}:
{\it Discrete-time fractional variational problems},
Signal Process. {\bf 91} (2011), no.~3, 513--524],
is enriched with useful tools for the explicit solution
of discrete equations involving left and right
fractional difference operators. New results for
the right fractional $h$ sum are proved. Illustrative examples
show the effectiveness of the obtained results in solving
fractional discrete Euler--Lagrange equations.
\end{abstract}

\maketitle


\section{Introduction}

The fractional calculus is a generalization of (integer order) differential calculus,
allowing to define derivatives (and integrals) of real or complex order \cite{Mill1,samko}.
It is a mathematical subject that has proved to be very useful in applied fields
such as economics, engineering, and physics \cite{Almeida,Magin,M:T,Mach}.
Several definitions of fractional derivatives, including Riemann--Liouville, Caputo,
Riesz, Riesz–-Caputo, Weyl, Grunwald--Letnikov, Hadamard, and Chen derivatives,
are available in the literature \cite{MyID:154,MR2421931,MyID:149,MyID:163}.
The most common used fractional derivative is the
Riemann--Liouville \cite{MR2558546,El-Nab:T,MR2433010,Moz:T}.
Analogously, one can define a discrete fractional derivative in different ways.
In 1989, Miller and Ross introduced the discrete analogue
of the Riemann--Liouville fractional derivative
and proved some properties of the fractional difference operator \cite{Mill0}.
More results on the theory introduced by Miller and Ross are given
in the works of Atici and Eloe \cite{Atici0,Atici}.
See also \cite{withNunoDorota,withNuno:cfc,Goodrich},
and \cite{Atici:SS} for applications to the
Gompertz fractional difference equation and tumor growth models.
Regarding other fractional discrete definitions, we refer
the reader to \cite{Anas,BFT,gray,ort} and references therein.
Here we follow \cite{BFT:SP}, \textrm{i.e.}, we adopt a more general
fractional $h$-difference Riemann--Liouville operator.
The presence of the $h$ parameter is particularly interesting
from the numerical point of view,
because when $h$ tends to zero the solutions
of the fractional difference equations can be seen
as approximations to the solutions of corresponding
Riemann--Liouville fractional differential equations
\cite{BFT:SP,Frederico:Torres1}
(\textrm{cf.} Proposition~\ref{prop:ref:asked}).

In the recent work of Bastos \textrm{et al.} \cite{BFT:SP},
necessary optimality conditions of first and second order
are proved for the fractional $h$-difference variational problem
\begin{equation}
\label{eq:prb:int}
\begin{gathered}
\mathcal{L}(y)=
\sum_{t=\frac{a}{h}}^{\frac{b}{h}-1}
L\left(th,y(\sigma_h(th)),\left({_a}\Delta_h^{\alpha}
y\right)(th),\left({_h}\Delta_b^{\alpha} y\right)(th)\right)h
\longrightarrow \min\, ,\\
(y(a)=A),\ (y(b)=B),
\end{gathered}
\end{equation}
as well as transversality conditions when
the boundary conditions $y(a)=A$ or $y(b)=B$ are not given
(see Section~\ref{sec2} for definitions and notations).
The main result of \cite{BFT:SP} gives an Euler--Lagrange type equation
for problem \eqref{eq:prb:int}, but no clues are devised
for the solution of such fractional $h$-difference equations.
Instead, some examples are solved numerically \cite{BFT,BFT:SP}.
Here we develop further the subject of the calculus of variations within the
fractional discrete setting, by obtaining explicit solutions
to the fractional difference Euler--Lagrange equations \cite{BFT,BFT:SP}.

Our results are given in Section~\ref{sec2},
where we prove some new formulas for the
fractional $h$-difference operator. The obtained results
are then used in Section~\ref{sec3} to solve
two illustrative examples of \eqref{eq:prb:int},
for which the global minimizers are explicitly found in exact form.
This is in contrast with \cite{BFT,BFT:SP}, where all the solutions
are obtained via approximated numerical computations.


\section{Main Results}
\label{sec2}

Before stating and proving our results, we introduce some
definitions and notations. Let $h>0$ and put
$\mathbb{T}=\{a,a+h,a+2h,\ldots,b\}$ with
$a\in\mathbb{R}$ and $b=a+kh$ for $k\in\{2,3,\ldots\}$. Let us
denote by $\mathcal{F}_\mathbb{T}$ the set of real valued functions
defined on $\mathbb{T}$, $\sigma_h(t)=t+h$, and $\rho_h(t)=t-h$.

\begin{defin}
For a function $f\in\mathcal{F}_\mathbb{T}$
the forward $h$-difference operator is defined as
$$
(\Delta_h f)(t)=\frac{f(\sigma_h(t))-f(t)}{h} \, ,
\quad t\in\{a,a+h,a+2h,\ldots,\rho_h(b)\},
$$
while the $h$-difference sum is given by
$$
({_a}\Delta^{-1}_hf)(t)=\sum_{k=\frac{a}{h}}^{\frac{t}{h}-1}f(kh)h\, ,
\quad t\in\{a,a+h,a+2h,\ldots,\sigma_h(b)\}.
$$
\end{defin}

\begin{defin}
For arbitrary $x,y\in\mathbb{R}$ the $h$-factorial function is defined by
\begin{equation*}
x_h^{(y)}=h^y\frac{\Gamma(\frac{x}{h}+1)}{\Gamma(\frac{x}{h}+1-y)}\, ,
\end{equation*}
where $\Gamma$ is the Euler gamma function.
We use the convention that division at a pole yields zero.
\end{defin}


In \cite{BFT:SP} it is remarked that in the
case $h = 1$, then $x_h^{(y)}$ coincides with the falling factorial power.
One also expects to see that $x_h^{(y)}$ converges to $x^y$ when $h$ tends to zero.
Since this is not addressed in \cite{BFT:SP}, we prove it here.

\begin{prop}
\label{prop:ref:asked}
For $x\geq 0$ and $y\in\mathbb{R}$,
\begin{equation}
\label{lim0}
\lim_{h\rightarrow 0} x_h^{(y)}=x^y.
\end{equation}
\end{prop}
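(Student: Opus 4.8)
The plan is to reduce the statement to the classical asymptotics of the ratio of two Gamma functions whose arguments tend to infinity. First I would dispose of the degenerate case $x=0$: here $x_h^{(y)}=h^y\,\Gamma(1)/\Gamma(1-y)$, which tends to $0=0^y$ when $y>0$ (using the convention that division at a pole gives zero, needed precisely when $y$ is a positive integer), equals $1=0^0$ when $y=0$, and diverges to $+\infty=0^y$ when $y<0$; so \eqref{lim0} holds in the extended sense in all cases. From now on assume $x>0$, so that $z:=\frac{x}{h}+1-y\to+\infty$ as $h\to0^+$; in particular $z$ stays bounded away from the poles of $\Gamma$ once $h$ is small, so all the quantities below are well defined.

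Next I would rewrite the quotient appearing in the definition of $x_h^{(y)}$ so that the two Gamma arguments differ by exactly $y$, namely
\[
\frac{\Gamma\!\left(\frac{x}{h}+1\right)}{\Gamma\!\left(\frac{x}{h}+1-y\right)}=\frac{\Gamma(z+y)}{\Gamma(z)}\, .
\]
The key ingredient is then Stirling's formula, which yields the well-known limit $\lim_{z\to+\infty}\Gamma(z+y)\big/\bigl(\Gamma(z)\,z^{y}\bigr)=1$, equivalently $\Gamma(z+y)/\Gamma(z)=z^{y}\bigl(1+o(1)\bigr)$ as $z\to+\infty$. Substituting into the definition of the $h$-factorial gives
\[
x_h^{(y)}=h^y\,\frac{\Gamma(z+y)}{\Gamma(z)}=h^y z^{y}\bigl(1+o(1)\bigr)=x^{y}\left(1+\frac{(1-y)h}{x}\right)^{\!y}\bigl(1+o(1)\bigr)\, ,
\]
where I have used $h^y z^y=h^y\left(\frac{x}{h}\right)^{y}\bigl(1+\frac{(1-y)h}{x}\bigr)^{y}=x^{y}\bigl(1+\frac{(1-y)h}{x}\bigr)^{y}$. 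Letting $h\to0^+$, the factor $\bigl(1+(1-y)h/x\bigr)^{y}$ tends to $1$, and \eqref{lim0} follows.

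The only genuine obstacle is justifying the Stirling step for an arbitrary real exponent $y$, rather than merely an integer or positive one; I would handle this by invoking Stirling's formula $\Gamma(z+1)\sim\sqrt{2\pi z}\,(z/e)^{z}$ directly and estimating $\Gamma(z+y)/\Gamma(z)$ as $z\to+\infty$, which gives the required $z^{y}\bigl(1+o(1)\bigr)$ behaviour uniformly in the relevant range. Everything else is elementary algebra with the $h$-factorial function, so the proof is short.
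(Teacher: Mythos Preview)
Your proof is correct and follows essentially the same route as the paper's: both introduce the variable $t=z=\frac{x}{h}+1-y$ and reduce the claim to the asymptotic behaviour of $\Gamma(z+y)/\Gamma(z)$ as $z\to+\infty$. The paper invokes the form $\Gamma(t+\beta)/\Gamma(t)\sim(t+\beta-1)^{\beta}$ (citing a reference of Merkle), which is convenient because $t+y-1=x/h$ exactly, so $h^{y}(t+y-1)^{y}=x^{y}$ with no residual correction factor; your Stirling-based variant $\Gamma(z+y)/\Gamma(z)\sim z^{y}$ is equivalent and costs only the extra elementary step of showing $(1+(1-y)h/x)^{y}\to1$. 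You are also more careful than the paper in treating the boundary case $x=0$ separately, which the paper's argument tacitly requires since otherwise $t$ does not tend to infinity.
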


\begin{proof}
Equality \eqref{lim0} is a straightforward consequence
of the following  well-known asymptotic formula for the Gamma
function:
$$
\lim_{x \rightarrow +\infty} \frac{\Gamma(x + \beta)}{\Gamma(x)}
= (x+\beta-1)^\beta, \quad \beta \in \mathbb{R}
$$
(see, \textrm{e.g.},
inequality (33) and Corollary~3 in \cite{MR1676733}).
Indeed, starting from the definition of $x_h^{(y)}$
and introducing the new variable $t = x/h + 1 - y$, we have
$$
x_h^{(y)}=\frac{x^y}{(t+y-1)^y} \cdot \frac{\Gamma(t+y)}{\Gamma(t)}
$$
for any $y \in \mathbb{R}$. We obtain \eqref{lim0}
taking the limit $t \rightarrow +\infty$ or,
equivalently, the limit $h\rightarrow 0^+$.
\end{proof}

The motivation for the next definition can be found in \cite{BFT:SP}.

\begin{defin}[\cite{BFT:SP}]
\label{def0}
Let $f\in\mathcal{F}_\mathbb{T}$. The \emph{left and right fractional
$h$-sum} of order $\nu>0$ are, respectively, the operators
$_a\Delta_h^{-\nu} : \mathcal{F}_\mathbb{T} \rightarrow
\mathcal{F}_{\tilde{\mathbb{T}}_\nu^+}$ and $_h\Delta_b^{-\nu} :
\mathcal{F}_\mathbb{T} \rightarrow
\mathcal{F}_{\tilde{\mathbb{T}}_\nu^-}$,
$\tilde{\mathbb{T}}_\nu^\pm = \{a \pm \nu h,a \pm \nu h\pm
h,\ldots,b \pm \nu h\}$, given by
\begin{equation*}
\begin{split}
\left({_a}\Delta_h^{-\nu}f\right)(t)
&=\frac{1}{\Gamma(\nu)}\sum_{k=\frac{a}{h}}^{\frac{t}{h}-\nu}(t-\sigma_h(kh))_h^{(\nu-1)}f(kh)h,\\
\left({_h}\Delta_b^{-\nu}f\right)(t)
&=\frac{1}{\Gamma(\nu)}\sum_{k=\frac{t}{h}+\nu}^{\frac{b}{h}}(kh-\sigma_h(t))_h^{(\nu-1)}f(kh)h.
\end{split}
\end{equation*}
We define $\left({_a}\Delta_h^{0}f\right)(t)=f(t)$
and $\left({_h}\Delta_b^{0}f\right)(t)=f(t)$.
\end{defin}

\begin{defin}
\label{def1}
Let $0<\alpha\leq 1$ and set $\gamma = 1-\alpha$.
The \emph{left fractional $h$-difference}
$_a\Delta_h^\alpha f$ and the
\emph{right fractional $h$-difference}
$_h\Delta_b^\alpha f$ of order $\alpha$
of a function $f\in\mathcal{F}_\mathbb{T}$
are defined, respectively, by
\begin{equation*}
\begin{split}
\left(_a\Delta_h^\alpha f\right)(t)
&= \left(\Delta_h{_a}\Delta_h^{-\gamma}f\right)(t)\, ,
\quad t\in\{a + \gamma h,a + \gamma h + h,\ldots,\rho_h(b) + \gamma h\}\, ,\\
\left({_h}\Delta_b^\alpha f\right)(t)
&=-(\Delta_h{_h}\Delta_b^{-\gamma}f(t)\, ,
\quad t\in\{a - \gamma h,a - \gamma h - h,\ldots,\rho_h(b) - \gamma h\}.
\end{split}
\end{equation*}
\end{defin}

\begin{rem}
We define fractional sums/differences for functions on a bounded domain.
This is done so, because of the problems of the calculus
of variations we consider here. Nevertheless, one can use our definitions
for functions with unbounded domains:
an unbounded domain from above for the left fractional sum/difference,
an unbounded domain from below for the right fractional sum/difference.
\end{rem}

Let us now recall a result that will be used later in finding
solutions to the boundary value problems originated from the
fractional $h$-difference calculus of variations.

\begin{thm}[Theorem~2.10 of \cite{BFT:SP}]
\label{thm1}
Let $f\in\mathcal{F}_\mathbb{T}$ and $\nu\geq0$. Then,
\begin{equation*}
\left({_a}\Delta_{h}^{-\nu}
\Delta_hf\right)(t)=\left(\Delta_h{_a}\Delta_h^{-\nu}f\right)(t)-\frac{\nu}{\Gamma(\nu
+ 1)}(t-a)_h^{(\nu-1)}f(a)
\end{equation*}
for all $t\in\{a+\nu h,a+\nu h+h,\ldots,\rho_h(b)+\nu h\}$.
\end{thm}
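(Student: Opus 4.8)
The plan is to expand both sides of the claimed identity directly from the definitions and reconcile them by an index shift, in the spirit of a discrete integration-by-parts / Abel summation argument. First I would write out the left-hand side: applying Definition~\ref{def0} to the function $\Delta_h f$ gives
\[
\left({_a}\Delta_h^{-\nu}\Delta_h f\right)(t)
=\frac{1}{\Gamma(\nu)}\sum_{k=\frac{a}{h}}^{\frac{t}{h}-\nu}
(t-\sigma_h(kh))_h^{(\nu-1)}\,\frac{f(\sigma_h(kh))-f(kh)}{h}\,h .
\]
The sum telescopes partially once we split it into the $f(\sigma_h(kh))$ part and the $f(kh)$ part; in the first sum I would shift the summation index $k\mapsto k-1$ so that both sums range over a common set of arguments $f(kh)$, at the cost of boundary terms at the two endpoints of the range.

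Next I would compute the right-hand side. By definition $\left(\Delta_h{_a}\Delta_h^{-\nu}f\right)(t)=\frac{1}{h}\bigl[\left({_a}\Delta_h^{-\nu}f\right)(\sigma_h(t))-\left({_a}\Delta_h^{-\nu}f\right)(t)\bigr]$, and expanding each fractional sum via Definition~\ref{def0} and again aligning the ranges produces a sum of the form $\frac{1}{\Gamma(\nu)}\sum_k \bigl[(\,\cdot\,)_h^{(\nu-1)}\text{-differences}\bigr]f(kh)$ plus a boundary contribution. The key algebraic fact needed to match the two expansions is the discrete "power rule" for the $h$-factorial, namely that $\Delta_h$ applied in the appropriate variable to $(t-\sigma_h(kh))_h^{(\nu-1)}$ reproduces, up to the factor coming from $\Gamma$-function recursion, the kernel that shows up after the index shift on the left-hand side; concretely one uses $\Gamma(z+1)=z\Gamma(z)$ to rewrite $(t-a)_h^{(\nu-1)}$ and to see the coefficient $\dfrac{\nu}{\Gamma(\nu+1)}=\dfrac{1}{\Gamma(\nu)}$ emerge on the leftover boundary term at $k=a/h$.

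I expect the main obstacle to be bookkeeping rather than any deep idea: one must be careful that the index $\frac{t}{h}-\nu$ is an integer (it is, since $t$ ranges over $\{a+\nu h, a+\nu h+h,\ldots\}$), track precisely which endpoint survives the telescoping after the shift, and confirm that the upper-endpoint boundary term vanishes — this should follow from the $h$-factorial evaluating via the convention "division at a pole yields zero" (the argument $(t-\sigma_h(kh))_h^{(\nu-1)}$ at the top of the range hits a pole of $\Gamma$ in the denominator when $\nu<1$, and is otherwise absorbed). After that, matching the single remaining boundary term with $-\dfrac{\nu}{\Gamma(\nu+1)}(t-a)_h^{(\nu-1)}f(a)$ is a direct application of $\Gamma(\nu+1)=\nu\Gamma(\nu)$, and the interior sums coincide term-by-term, completing the proof. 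The edge case $\nu=0$ is immediate since both sides reduce to $\left(\Delta_h f\right)(t)$ with the boundary term carrying the factor $\nu=0$.
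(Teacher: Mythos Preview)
The paper does not prove this theorem at all: it is quoted verbatim as Theorem~2.10 of \cite{BFT:SP} and used as a black box, so there is no ``paper's own proof'' to compare against. Your direct Abel-summation strategy is the standard way to establish the identity in the original source, and it does go through; but one point in your write-up is not right and would trip you up if you actually executed the computation.

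You say the upper-endpoint boundary term ``vanishes'' because $(t-\sigma_h(kh))_h^{(\nu-1)}$ hits a pole at the top of the range. It does not: at $k=\tfrac{t}{h}-\nu$ one has $(t-\sigma_h(kh))_h^{(\nu-1)}=((\nu-1)h)_h^{(\nu-1)}=h^{\nu-1}\Gamma(\nu)\neq 0$. What actually happens is that the top boundary term produced by the Abel shift on the left-hand side equals $h^{\nu-1}f\bigl((\tfrac{t}{h}-\nu+1)h\bigr)$, and the \emph{same} term appears when you expand $\Delta_h\bigl({_a}\Delta_h^{-\nu}f\bigr)(t)$ on the right-hand side (it is the extra summand coming from the longer range of the sum at $t+h$). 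These two nonzero contributions cancel; nothing vanishes by the pole convention. Once you fix that, the surviving lower boundary term on the left is $-\tfrac{1}{\Gamma(\nu)}(t-a-h)_h^{(\nu-1)}f(a)$, the interior sums differ only in the single $k=a/h$ summand, and combining those two pieces via the power rule $(t-a-h)_h^{(\nu-1)}+(\nu-1)h\,(t-a-h)_h^{(\nu-2)}=(t-a)_h^{(\nu-1)}$ (equivalently $\Gamma(\nu+1)=\nu\Gamma(\nu)$) delivers exactly $-\dfrac{\nu}{\Gamma(\nu+1)}(t-a)_h^{(\nu-1)}f(a)$, as required. Your handling of the $\nu=0$ edge case is fine.
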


The next lemma permits to shorten the proofs of our main
results. Essentially, it allow us to borrow information from the
formulas obtained in \cite{Atici0}.

\begin{lem}
\label{lem0}
Let $f\in\mathcal{F}_\mathbb{T}$ and $\nu\geq0$. Then,
\begin{equation*}
\left(_a\Delta_{h}^{-\nu}f\right)(t)
=h^\nu\left({_\frac{a}{h}}\Delta_{1}^{-\nu} s \mapsto f(sh)\right)\left(\frac{t}{h}\right),
\quad t\in\{a+\nu h, a+\nu h+h,\ldots,b+\nu h\}.
\end{equation*}
\end{lem}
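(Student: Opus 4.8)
The plan is to prove the identity by a direct computation, unwinding both sides from the definitions and matching the summands term by term. First I would write out the left-hand side $\left(_a\Delta_h^{-\nu}f\right)(t)$ using Definition~\ref{def0}, namely
\[
\left(_a\Delta_h^{-\nu}f\right)(t)
=\frac{1}{\Gamma(\nu)}\sum_{k=\frac{a}{h}}^{\frac{t}{h}-\nu}\bigl(t-\sigma_h(kh)\bigr)_h^{(\nu-1)}f(kh)\,h\, .
\]
Then I would expand the $h$-factorial function using its definition: since $t-\sigma_h(kh)=t-kh-h=h\bigl(\tfrac{t}{h}-k-1\bigr)$, one has
\[
\bigl(t-\sigma_h(kh)\bigr)_h^{(\nu-1)}
=h^{\nu-1}\,\frac{\Gamma\bigl(\tfrac{t}{h}-k\bigr)}{\Gamma\bigl(\tfrac{t}{h}-k-\nu+1\bigr)}\, .
\]
The key observation is that this last expression is exactly $\bigl(\tfrac{t}{h}-\sigma_1(k)\bigr)_1^{(\nu-1)}$, the $h=1$ version of the same factorial evaluated at the rescaled arguments. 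This is the step where one must be careful with the bookkeeping of the $h$ powers.

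Next I would substitute this back and count powers of $h$: each summand of the left-hand side carries an $h^{\nu-1}$ from the factorial and an extra $h$ from the $h$ multiplying $f(kh)$, giving $h^\nu$ total, which I pull outside the sum. What remains inside is
\[
\frac{1}{\Gamma(\nu)}\sum_{k=\frac{a}{h}}^{\frac{t}{h}-\nu}\bigl(\tfrac{t}{h}-\sigma_1(k)\bigr)_1^{(\nu-1)}\,g(k)\, ,
\]
where $g(s):=f(sh)$. Comparing with Definition~\ref{def0} applied with $h=1$, lower limit $a/h$, and argument $t/h$, this is precisely $\left({_{\frac{a}{h}}}\Delta_1^{-\nu}g\right)\!\left(\tfrac{t}{h}\right)$ (noting that the $1$-sum has its summand multiplied by $1$, so no factor of $h$ appears there). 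Hence the left-hand side equals $h^\nu\left({_{\frac{a}{h}}}\Delta_1^{-\nu}g\right)\!\left(\tfrac{t}{h}\right)$, which is the claim. Finally I would check that the domains match: $t$ ranges over $\{a+\nu h,\ldots,b+\nu h\}$ iff $t/h$ ranges over $\{\tfrac{a}{h}+\nu,\ldots,\tfrac{b}{h}+\nu\}$, which is exactly the domain of the right-hand side, and I would dispose of the boundary case $\nu=0$ separately by appealing directly to the conventions $\left({_a}\Delta_h^0 f\right)(t)=f(t)$ and $\left({_{\frac{a}{h}}}\Delta_1^0 g\right)(t/h)=g(t/h)=f(t)$.

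The main obstacle, such as it is, will be purely notational rather than conceptual: keeping the index of summation, the lower limit, the upper limit, and the evaluation point all consistently rescaled by $h$, and making sure the total power of $h$ extracted is $\nu$ (and not $\nu-1$ or $\nu+1$) after accounting for both the factorial term and the measure factor $h$ in the left sum versus the factor $1$ in the $h=1$ sum. There is no analytic difficulty; the proof is a change of variables together with the homogeneity relation $x_h^{(y)}=h^y\,(x/h)_1^{(y)}$ for the $h$-factorial, which itself is immediate from the defining formula.
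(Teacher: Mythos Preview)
Your proposal is correct and follows essentially the same route as the paper: both proofs unwind the definition, use the homogeneity relation $x_h^{(y)}=h^{y}(x/h)_1^{(y)}$ to rewrite $(t-\sigma_h(kh))_h^{(\nu-1)}=h^{\nu-1}(t/h-\sigma_1(k))_1^{(\nu-1)}$, collect the factor $h^{\nu-1}\cdot h=h^\nu$, and recognize the remaining sum as the $h=1$ fractional sum of $s\mapsto f(sh)$ at $t/h$. Your version is slightly more explicit (you pass through the Gamma function and separately dispose of $\nu=0$ and the domain check), but the argument is the same.
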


\begin{proof}
We have
\begin{align*}
\left(_a\Delta_{h}^{-\nu}f\right)(t)
&=\frac{1}{\Gamma(\nu)}\sum_{k=\frac{a}{h}}^{\frac{t}{h}-\nu}(t-\sigma_h(kh))_h^{(\nu-1)}f(kh)h\\
&=\frac{1}{\Gamma(\nu)}\sum_{k=\frac{a}{h}}^{t'-\nu}(t'h-\sigma_h(kh))_h^{(\nu-1)}f(kh)h\\
&=\frac{1}{\Gamma(\nu)}\sum_{k=\frac{a}{h}}^{t'-\nu}h^{\nu-1}(t'-\sigma_1(k))_1^{(\nu-1)}f(kh)h\\
&=h^\nu\left({_\frac{a}{h}}\Delta_{1}^{-\nu} s \mapsto f(sh)\right)\left(\frac{t}{h}\right),
\end{align*}
and therefore the proof is done.
\end{proof}

\begin{lem}[Lemma~2.3 of \cite{Atici0}]
\label{teo0}
Let $\mu,\nu$ be two real numbers such that
$\mu,\mu+\nu\in\mathbb{R}\backslash\{\ldots,-2,-1\}$. Then,
\begin{equation*}
\left(_\mu\Delta_1^{-\nu} s \mapsto s_1^{(\mu)}\right)(t)
=\frac{\Gamma(\mu+1)}{\Gamma(\mu+\nu+1)}t_1^{(\mu+\nu)},
\quad t=\mu+\nu, \mu+\nu+1, \ldots
\end{equation*}
\end{lem}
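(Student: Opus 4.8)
The plan is to expand the left--hand side straight from the definition of the fractional $1$-sum, collapse it to a finite sum, recognise a Vandermonde convolution, and simplify. Since the hypothesis on $t$ means $t=\mu+\nu+n$ for some $n\in\{0,1,2,\ldots\}$, unfolding $\left({_\mu}\Delta_1^{-\nu}[s\mapsto s_1^{(\mu)}]\right)(t)$ from Definition~\ref{def0} (with $h=1$, $a=\mu$ and $f(s)=s_1^{(\mu)}$) produces a sum in which the index $k$ runs over the finite set $\{\mu,\mu+1,\ldots,\mu+n\}$, and the substitution $k=\mu+j$ turns it into
\[
\left({_\mu}\Delta_1^{-\nu}[s\mapsto s_1^{(\mu)}]\right)(t)
=\frac{1}{\Gamma(\nu)}\sum_{j=0}^{n}\bigl(n+\nu-1-j\bigr)_1^{(\nu-1)}\,(\mu+j)_1^{(\mu)}\, .
\]
Expressing the $1$-factorial functions through the Gamma function and pulling out the constants, this equals $\Gamma(\mu+1)\sum_{j=0}^{n}\binom{\mu+j}{j}\binom{\nu-1+n-j}{n-j}$, where the two binomial coefficients are to be read as the polynomials $\binom{\mu+j}{j}=(\mu+1)(\mu+2)\cdots(\mu+j)/j!$ and $\binom{\nu-1+n-j}{n-j}=\nu(\nu+1)\cdots(\nu+n-j-1)/(n-j)!$ in $\mu$ and $\nu$; these coincide with the corresponding Gamma-quotient expressions precisely because the hypotheses $\mu,\mu+\nu\notin\{-1,-2,\ldots\}$ exclude the poles that would otherwise intervene.

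The crux of the argument is then the Vandermonde convolution
\[
\sum_{j=0}^{n}\binom{\mu+j}{j}\binom{\nu-1+n-j}{n-j}=\binom{\mu+\nu+n}{n}
\]
for arbitrary real $\mu,\nu$. I would obtain this by comparing the coefficient of $x^{n}$ on the two sides of $(1-x)^{-(\mu+1)}(1-x)^{-\nu}=(1-x)^{-(\mu+\nu+1)}$, using the negative binomial series; since that coefficient is on each side a polynomial in $(\mu,\nu)$, it suffices to check that the two polynomials agree for all nonnegative integers $\mu,\nu$, which is the classical Vandermonde identity. This passage from integer to real parameters is the only genuinely delicate step; the rest is bookkeeping with the Gamma function.

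Combining the two steps, the left--hand side equals
\[
\Gamma(\mu+1)\binom{\mu+\nu+n}{n}
=\frac{\Gamma(\mu+1)}{\Gamma(\mu+\nu+1)}\cdot\frac{\Gamma(\mu+\nu+n+1)}{\Gamma(n+1)}\, ,
\]
and since $n+\mu+\nu=t$ the last fraction is $\Gamma(t+1)/\Gamma(t+1-\mu-\nu)=t_1^{(\mu+\nu)}$, which is exactly the asserted formula. The non-integrality assumptions on $\mu$ and $\mu+\nu$ guarantee that no division by a pole occurs anywhere along the way. (An induction on $n$ is also conceivable, but the summand in the first display depends on $t$, so the increment $\left({_\mu}\Delta_1^{-\nu}[s\mapsto s_1^{(\mu)}]\right)(t+1)-\left({_\mu}\Delta_1^{-\nu}[s\mapsto s_1^{(\mu)}]\right)(t)$ does not telescope cleanly, and the combinatorial route above is the shorter one.)
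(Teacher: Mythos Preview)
The paper does not supply its own proof of this lemma; it is quoted directly from Atici and Eloe \cite{Atici0} and used as a black box. So there is nothing in the present paper to compare your argument against.

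That said, your proof is correct and self-contained. The unfolding of Definition~\ref{def0} with $h=1$, $a=\mu$, $t=\mu+\nu+n$ is accurate, the rewriting of the two $1$-factorial factors as generalised binomial coefficients via
\[
(\mu+j)_1^{(\mu)}=\Gamma(\mu+1)\binom{\mu+j}{j},\qquad
(n+\nu-1-j)_1^{(\nu-1)}=\Gamma(\nu)\binom{\nu-1+n-j}{n-j}
\]
is correct, and the convolution identity you invoke is exactly the coefficient of $x^{n}$ in $(1-x)^{-(\mu+1)}(1-x)^{-\nu}=(1-x)^{-(\mu+\nu+1)}$. One minor remark: this last identity already holds for arbitrary real exponents as an identity of formal power series (via $\exp(-\alpha\log(1-x))$), so the detour through ``two polynomials in $(\mu,\nu)$ agreeing on all nonnegative integers'' is not needed, though it is not wrong. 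Note also that the finiteness of $\Gamma(\nu)$ and of each $\Gamma(n+\nu-j)$ relies on $\nu>0$, which is implicit in Definition~\ref{def0}; the explicit hypotheses $\mu,\mu+\nu\notin\{-1,-2,\ldots\}$ then dispose of the remaining Gamma values exactly as you observe.
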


\begin{cor}
\label{cor0}
Suppose that
$\frac{\mu}{h},\frac{\mu}{h}+\nu\in\mathbb{R}\backslash\{\ldots,-2,-1\}$.
Then,
\begin{equation*}
\left(_a\Delta_{h}^{-\nu} s \mapsto \left(s-a+\mu\right)_h^{(\frac{\mu}{h})}\right)(t)
=\frac{\Gamma(\frac{\mu}{h}+1)}{\Gamma(\frac{\mu}{h}
+\nu+1)}\left(t-a+\mu\right)_h^{(\frac{\mu}{h}+\nu)}
\end{equation*}
for all $t\in\{a+\nu h, a+\nu h+h,\ldots\}$.
\end{cor}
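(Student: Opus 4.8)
The plan is to reduce Corollary~\ref{cor0} to Lemma~\ref{teo0} by passing from step size $h$ to step size $1$ via Lemma~\ref{lem0}, exactly as Lemma~\ref{lem0} was set up to permit. First I would apply Lemma~\ref{lem0} with $f(s) = (s-a+\mu)_h^{(\mu/h)}$, which gives
\begin{equation*}
\left(_a\Delta_{h}^{-\nu} s \mapsto (s-a+\mu)_h^{(\mu/h)}\right)(t)
= h^\nu\left({_{a/h}}\Delta_{1}^{-\nu} s \mapsto (sh-a+\mu)_h^{(\mu/h)}\right)\!\left(\tfrac{t}{h}\right).
\end{equation*}
So the whole problem is to understand the inner sum, and the key is to rewrite $(sh-a+\mu)_h^{(\mu/h)}$ in terms of a unit-step factorial function of a shifted variable.

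The second step is a purely algebraic identity for the $h$-factorial. From the definition,
\begin{equation*}
(sh-a+\mu)_h^{(\mu/h)} = h^{\mu/h}\,\frac{\Gamma\!\left(s-\tfrac{a}{h}+\tfrac{\mu}{h}+1\right)}{\Gamma(s-\tfrac{a}{h}+1)}
= h^{\mu/h}\left(s-\tfrac{a}{h}+\tfrac{\mu}{h}\right)_1^{(\mu/h)},
\end{equation*}
using the $h=1$ case of the definition. Thus, writing $\mu' := \mu/h$ and $g(s) := (s - a/h)_1^{(\mu')}$, we see that $(sh-a+\mu)_h^{(\mu/h)} = h^{\mu'}\, g(s + \mu')$ — that is, the inner function is, up to the constant $h^{\mu'}$, a unit-step factorial power evaluated at the translated argument $s - a/h + \mu'$. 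Translation invariance of the unit-step fractional sum $_\cdot\Delta_1^{-\nu}$ under simultaneous shift of the lower limit and the variable then lets us replace $_{a/h}\Delta_1^{-\nu}$ acting on $s \mapsto g(s+\mu')$ with $_{\mu'}\Delta_1^{-\nu}$ acting on $s \mapsto s_1^{(\mu')}$, evaluated at $t/h - a/h + \mu'$. Here the hypothesis $\mu/h,\ \mu/h+\nu \in \mathbb{R}\setminus\{\dots,-2,-1\}$ is precisely what is needed to invoke Lemma~\ref{teo0} with $\mu$ there equal to $\mu'$.

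The third step is to apply Lemma~\ref{teo0}, which yields $\frac{\Gamma(\mu'+1)}{\Gamma(\mu'+\nu+1)}\left(\tfrac{t}{h}-\tfrac{a}{h}+\mu'+\nu\right)_1^{(\mu'+\nu)}$ for the unit-step quantity. Collecting the constants $h^\nu \cdot h^{\mu'}$ in front and undoing the $h=1$ reduction of the factorial function in the opposite direction (i.e.\ $h^{\mu'+\nu}(\cdot)_1^{(\mu'+\nu)} = (\cdot h)_h^{(\mu'+\nu)}$ with the argument scaled back by $h$) recovers exactly $\frac{\Gamma(\mu/h+1)}{\Gamma(\mu/h+\nu+1)}(t-a+\mu)_h^{(\mu/h+\nu)}$, as claimed. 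I expect the main obstacle to be purely bookkeeping: keeping the shifts of the summation index and of the evaluation point consistent when translating between $_{a/h}\Delta_1^{-\nu}$ and $_{\mu'}\Delta_1^{-\nu}$, and making sure the convention ``division at a pole yields zero'' does not interfere at the endpoints — but no genuinely hard analytic step is involved, since all the real content is already packaged in Lemmas~\ref{lem0} and~\ref{teo0}.
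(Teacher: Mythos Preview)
Your proposal is correct and follows exactly the route the paper intends: the paper's proof reads in its entirety ``The result is a simple consequence of Lemma~\ref{lem0} and Lemma~\ref{teo0},'' and your three steps---apply Lemma~\ref{lem0}, rewrite the $h$-factorial as $h^{\mu/h}$ times a unit-step factorial, shift the lower limit to invoke Lemma~\ref{teo0}, then undo the scaling---are precisely the computation hidden behind that sentence. The translation-invariance step you flag as the main bookkeeping hazard is indeed just the change of summation index $k\mapsto k+(\mu/h-a/h)$ in the defining sum, and your identification of the domain $t\in\{a+\nu h,a+\nu h+h,\dots\}$ with the range of validity $\mu'+\nu,\mu'+\nu+1,\dots$ in Lemma~\ref{teo0} is exactly right.
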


\begin{proof}
The result is a simple consequence of Lemma~\ref{lem0} and Lemma~\ref{teo0}.
\end{proof}

\begin{cor}
\label{cor1}
Suppose that
$-\frac{\mu}{h},-\frac{\mu}{h}+\nu\in\mathbb{R}\backslash\{\ldots,-2,-1\}$.
Then,
\begin{equation*}
\left(_h\Delta_{b}^{-\nu} s \mapsto
\left(b-\mu-s\right)_h^{(-\frac{\mu}{h})}\right)(t)
=\frac{\Gamma(-\frac{\mu}{h}+1)}{\Gamma(-\frac{\mu}{h}
+\nu+1)}\left(b-\mu-t\right)_h^{(-\frac{\mu}{h}+\nu)}
\end{equation*}
for all $t\in\{b-\nu h, b-\nu h-h,\ldots\}$.
\end{cor}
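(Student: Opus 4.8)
The plan is to mirror the strategy already used for Corollary~\ref{cor0}, but now for the right fractional $h$-sum. The essential obstacle is that Lemma~\ref{lem0} and Lemma~\ref{teo0} are stated for \emph{left} operators, so I first need a right-sided analogue of Lemma~\ref{lem0} that rescales $_h\Delta_b^{-\nu}$ to $_1\Delta_{b/h}^{-\nu}$, together with a right-sided analogue of Lemma~\ref{teo0}. Rather than re-prove those from scratch, the cleanest route is to exploit a reflection identity: the right fractional $h$-sum of $f$ at $t$ equals the left fractional $h$-sum, evaluated at a reflected point, of the reflected function $\tilde f(s)=f(a+b-s)$ (on the reflected time scale). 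Concretely, one checks directly from Definition~\ref{def0} that
\begin{equation*}
\left({_h}\Delta_b^{-\nu}f\right)(t)=\left({_a}\Delta_h^{-\nu}\,\tilde f\,\right)(a+b-t),
\qquad \tilde f(s)=f(a+b-s),
\end{equation*}
by the change of summation index $k\mapsto \frac{a+b}{h}-k$; the factor $(kh-\sigma_h(t))_h^{(\nu-1)}$ turns into $((a+b-t)-\sigma_h((a+b-kh)))_h^{(\nu-1)}$ under this substitution, which is exactly the kernel appearing in $_a\Delta_h^{-\nu}$.

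Granting that reflection identity, the proof is short. First I would apply it to the specific function $f(s)=\left(b-\mu-s\right)_h^{(-\mu/h)}$. Its reflection is $\tilde f(s)=\left(b-\mu-(a+b-s)\right)_h^{(-\mu/h)}=\left(s-a-\mu\right)_h^{(-\mu/h)}$, which is precisely the type of shifted $h$-factorial handled by Corollary~\ref{cor0} with the parameter $\mu$ there replaced by $-\mu$ (so that $\frac{\mu}{h}$ in Corollary~\ref{cor0} becomes $-\frac{\mu}{h}$, matching the hypothesis $-\frac{\mu}{h},-\frac{\mu}{h}+\nu\in\mathbb{R}\setminus\{\ldots,-2,-1\}$). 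Thus
\begin{equation*}
\left({_a}\Delta_h^{-\nu}\tilde f\right)(\tau)
=\frac{\Gamma(-\frac{\mu}{h}+1)}{\Gamma(-\frac{\mu}{h}+\nu+1)}\left(\tau-a-\mu\right)_h^{(-\frac{\mu}{h}+\nu)}.
\end{equation*}
Finally I set $\tau=a+b-t$ and read off, via the reflection identity, that $\left({_h}\Delta_b^{-\nu}f\right)(t)$ equals $\frac{\Gamma(-\mu/h+1)}{\Gamma(-\mu/h+\nu+1)}\left((a+b-t)-a-\mu\right)_h^{(-\mu/h+\nu)}=\frac{\Gamma(-\mu/h+1)}{\Gamma(-\mu/h+\nu+1)}\left(b-\mu-t\right)_h^{(-\mu/h+\nu)}$, which is the claimed formula. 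The admissible set $\{b-\nu h,b-\nu h-h,\ldots\}$ for $t$ corresponds under $t\mapsto a+b-t$ exactly to the set $\{a+\nu h,a+\nu h+h,\ldots\}$ on which Corollary~\ref{cor0} holds, so no domain issues arise.

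The main obstacle I anticipate is purely bookkeeping: verifying the reflection identity with the correct index shift so that $\sigma_h$ lands in the right place, and keeping track of the sign of $\mu$ when invoking Corollary~\ref{cor0}. If one prefers to avoid the reflection trick, the alternative is to prove directly a right-sided version of Lemma~\ref{lem0}, namely $\left({_h}\Delta_b^{-\nu}f\right)(t)=h^\nu\left({_1}\Delta_{b/h}^{-\nu}\,s\mapsto f(sh)\right)(t/h)$, and then a right-sided version of Lemma~\ref{teo0} for $s\mapsto(\beta-s)_1^{(\mu)}$; this is more self-contained but requires re-deriving the $h=1$ case rather than citing \cite{Atici0}. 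Either way the computation is routine once the change of variables is set up, and the only real care needed is with the pole convention for $\Gamma$ and with the endpoint of the summation range.
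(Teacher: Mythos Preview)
Your proposal is correct and follows essentially the same approach as the paper: both arguments convert the right fractional $h$-sum into a left one by a change of summation index and then invoke Corollary~\ref{cor0} with $\mu$ replaced by $-\mu$. The only cosmetic difference is that you package the substitution as a general reflection identity $\left({_h}\Delta_b^{-\nu}f\right)(t)=\left({_a}\Delta_h^{-\nu}\tilde f\right)(a+b-t)$ involving the left endpoint $a$, whereas the paper performs the index shift directly inside the sum and lands on $\left({_{-\mu}}\Delta_h^{-\nu}\, s\mapsto s_h^{(-\mu/h)}\right)(b-\mu-t)$, a left sum with starting point $-\mu$ rather than $a$; either way Corollary~\ref{cor0} finishes the job.
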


\begin{proof}
Using Corollary~\ref{cor0}, we get
\begin{align*}
\biggl({_h\Delta_{b}}^{-\nu}& s \mapsto
\left(b-\mu-s\right)_h^{(-\frac{\mu}{h})}\biggr)(t)
=\frac{1}{\Gamma(\nu)}\sum_{k=\frac{t}{h}+\nu}^{\frac{b}{h}}(kh
-\sigma_h(t))_h^{(\nu-1)}\left(b-\mu-kh\right)_h^{(-\frac{\mu}{h})}h\\
&=\frac{1}{\Gamma(\nu)}\sum_{k=\frac{t-b+\mu}{h}+\nu}^{\frac{\mu}{h}}(kh+b
-\mu-\sigma_h(t))_h^{(\nu-1)}\left(-kh\right)_h^{(-\frac{\mu}{h})}h\\
&=\frac{1}{\Gamma(\nu)}\sum_{k=-\frac{\mu}{h}}^{\frac{b-\mu-t}{h}
-\nu}(b-\mu-t-\sigma_h(kh))_h^{(\nu-1)}\left(kh\right)_h^{(-\frac{\mu}{h})}h\\
&=\left(_{-\mu}\Delta_{h}^{-\nu} s \mapsto s_h^{(-\frac{\mu}{h})}\right)(b-\mu-t)
=\frac{\Gamma(-\frac{\mu}{h}+1)}{\Gamma(-\frac{\mu}{h}
+\nu+1)}\left(b-\mu-t\right)_h^{(-\frac{\mu}{h}+\nu)}
\end{align*}
and the proof is complete.
\end{proof}

We now state and prove the law of exponents
for the fractional $h$-difference sums.

\begin{thm}
\label{thm0}
Let $f\in\mathcal{F}_\mathbb{T}$ and $\mu,\nu\geq0$. Then,
\begin{equation}
\label{ponto1}
\left(_{a+\nu h}\Delta_h^{-\mu}{_a}\Delta_h^{-\nu}f\right)(t)
=\left({_a}\Delta_h^{-(\mu+\nu)}f\right)(t),
\end{equation}
where $t\in\{a+(\mu+\nu)h,a+(\mu+\nu)h+h,\ldots,b+(\mu+\nu)h\}$; and
\begin{equation}
\label{ponto2}
\left(_{h}\Delta_{b-\nu h}^{-\mu}{_h}\Delta_b^{-\nu}f\right)(t)
=\left({_h}\Delta_b^{-(\mu+\nu)}f\right)(t),
\end{equation}
where $t\in\{b-(\mu+\nu)h,b-(\mu+\nu)h-h,\ldots,a-(\mu+\nu)h\}$.
\end{thm}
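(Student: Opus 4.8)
The plan is to prove the two identities by reducing them to the classical $h=1$ case via Lemma~\ref{lem0}, and then invoking the law of exponents for the (integer-step) fractional sums already established by Atici and Eloe in \cite{Atici0}. I will treat \eqref{ponto1} in detail and then obtain \eqref{ponto2} by the reflection trick used in the proof of Corollary~\ref{cor1}.

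For \eqref{ponto1}, first I would apply Lemma~\ref{lem0} to the inner sum, writing $\left({_a}\Delta_h^{-\nu}f\right)(t) = h^\nu \left({_{a/h}}\Delta_1^{-\nu}\, s\mapsto f(sh)\right)(t/h)$. Next I would apply Lemma~\ref{lem0} a second time to the outer operator $_{a+\nu h}\Delta_h^{-\mu}$, whose base point is $a+\nu h$; this turns the left-hand side of \eqref{ponto1} into $h^{\mu+\nu}\left({_{a/h+\nu}}\Delta_1^{-\mu}\,{_{a/h}}\Delta_1^{-\nu}\, s\mapsto f(sh)\right)(t/h)$. At this point the bracketed quantity is exactly the left-hand side of the classical composition law $\left({_{a/h+\nu}}\Delta_1^{-\mu}\,{_{a/h}}\Delta_1^{-\nu}g\right)(\tau) = \left({_{a/h}}\Delta_1^{-(\mu+\nu)}g\right)(\tau)$ for $g(s)=f(sh)$, which is Atici--Eloe's law of exponents (the $h=1$ instance of the theorem, proved in \cite{Atici0}; if one prefers, it can also be derived from Lemma~\ref{teo0} by linearity and a change in the order of summation). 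Applying that law and then using Lemma~\ref{lem0} once more in the reverse direction recovers $\left({_a}\Delta_h^{-(\mu+\nu)}f\right)(t)$, which is the right-hand side.

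For \eqref{ponto2}, I would mimic the computation in the proof of Corollary~\ref{cor1}: expand the double sum defining the left-hand side, and in each summation substitute the reflected index (replace the running variable $k$ by its mirror about the relevant midpoint and likewise replace $t$ by $a+b-t$ type reflections) so that each right fractional $h$-sum $_h\Delta_{b}^{-\nu}$, $_h\Delta_{b-\nu h}^{-\mu}$ is rewritten as a left fractional $h$-sum evaluated at a reflected argument. The nested right-sum composition then becomes a nested left-sum composition of the form already handled in \eqref{ponto1}, so \eqref{ponto1} applies and, undoing the reflections, yields \eqref{ponto2}. Throughout, one must check that the index ranges match: the outer operator in \eqref{ponto1} must have base point $a+\nu h$ precisely because $_a\Delta_h^{-\nu}f$ lives on $\tilde{\mathbb{T}}_\nu^+=\{a+\nu h,\ldots,b+\nu h\}$, and similarly $b-\nu h$ is forced in \eqref{ponto2}; verifying that the stated domain $t\in\{a+(\mu+\nu)h,\ldots,b+(\mu+\nu)h\}$ is exactly what the composition produces is a routine but necessary bookkeeping step.

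The main obstacle I anticipate is not conceptual but notational: carefully tracking the shifting base points and summation limits through the double application of Lemma~\ref{lem0}, and—especially in \eqref{ponto2}—getting the reflection substitutions right so that the telescoping of index ranges lands on the claimed set. As long as one is disciplined about writing $t'=t/h$ and peeling off the factors of $h$ one operator at a time (exactly as in the proof of Lemma~\ref{lem0}), the argument goes through cleanly; the genuinely substantive content is entirely contained in the already-cited $h=1$ law of exponents from \cite{Atici0}.
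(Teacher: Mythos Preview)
Your approach is correct but takes a different route from the paper's. The paper proves \eqref{ponto1} by a direct manipulation of the double sum: it interchanges the order of summation, shifts the inner index, and recognizes the resulting inner sum as $\bigl({_{(\nu-1)h}}\Delta_h^{-\mu}\, s\mapsto s_h^{(\nu-1)}\bigr)(t-\sigma_h(rh))$, which Corollary~\ref{cor0} evaluates to $\frac{\Gamma(\nu)}{\Gamma(\mu+\nu)}(t-\sigma_h(rh))_h^{(\mu+\nu-1)}$; the outer sum then collapses to the definition of ${_a}\Delta_h^{-(\mu+\nu)}f$. Identity \eqref{ponto2} is dismissed as analogous. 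You instead strip off the $h$-dependence with two applications of Lemma~\ref{lem0}, invoke the $h=1$ law of exponents from \cite{Atici0} as a black box, and then reapply Lemma~\ref{lem0}; for \eqref{ponto2} you reflect to the left case rather than repeat the computation. Your argument is tidier in that it isolates the substantive content as the already-known $h=1$ identity, whereas the paper's proof is fully self-contained and uses only the power rule (Lemma~\ref{teo0} via Corollary~\ref{cor0}) rather than the full $h=1$ composition law. One caution: verify that the $h=1$ law of exponents is actually stated in \cite{Atici0} and not only in \cite{Atici}; your parenthetical fallback (derive it from Lemma~\ref{teo0} by swapping the order of summation) is correct, but carrying that out amounts to reproducing the paper's computation in the special case $h=1$, so the saving is then purely organizational.
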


\begin{proof}
We prove \eqref{ponto1} only, \eqref{ponto2} being
accomplished analogously. First, note that if $\nu=0$ or
$\mu=\nu=0$, then the equality is valid by definition.
Therefore, assume that
$\nu-1,\nu-1+\mu\in\mathbb{R}\backslash\{\ldots,-2,-1\}$. Then,
\begin{align*}
\biggl({_{a+\nu h}}&\Delta_h^{-\mu}{{_a}\Delta_h^{-\nu}}f\biggr)(t)\\
&=\frac{h^2}{\Gamma(\nu)\Gamma(\mu)}\sum_{s=\frac{a}{h}+\nu}^{\frac{t}{h}
-\mu}(t-\sigma_h(sh))_h^{(\mu-1)}\sum_{r=\frac{a}{h}}^{s
-\nu}(sh-\sigma_h(rh))_h^{(\nu-1)}f(rh)\\
&=\frac{h^2}{\Gamma(\nu)\Gamma(\mu)}\sum_{r=\frac{a}{h}}^{\frac{t}{h}
-(\mu+\nu)}\sum_{s=r+\nu}^{\frac{t}{h}-\mu}(t-\sigma_h(sh))_h^{(\mu
-1)}(sh-\sigma_h(rh))_h^{(\nu-1)}f(rh)\\
&=\frac{h^2}{\Gamma(\nu)\Gamma(\mu)}\sum_{r=\frac{a}{h}}^{\frac{t}{h}
-(\mu+\nu)}\sum_{s=\nu-1}^{\frac{t}{h}-r-1-\mu}(t
-\sigma_h[(s+r+1)h])_h^{(\mu-1)}(sh)_h^{(\nu-1)}f(rh)\\
&=\frac{h^2}{\Gamma(\nu)\Gamma(\mu)}\sum_{r=\frac{a}{h}}^{\frac{t}{h}
-(\mu+\nu)}\sum_{s=\nu-1}^{\frac{t-\sigma_h(rh)}{h}-\mu}(t-\sigma_h(rh)
-\sigma_h(sh))_h^{(\mu-1)}(sh)_h^{(\nu-1)}f(rh)\\
&=\frac{h}{\Gamma(\nu)}\sum_{r=\frac{a}{h}}^{\frac{t}{h}-(\mu+\nu)}\left(_{(\nu
-1)h}\Delta_h^{-\mu} s \mapsto s^{(\nu-1)}_h\right)(t-\sigma_h(rh))f(rh)
\end{align*}
\begin{align*}
&=\frac{h}{\Gamma(\nu)}\sum_{r=\frac{a}{h}}^{\frac{t}{h}-(\mu
+\nu)}\frac{\Gamma(\nu)}{\Gamma(\nu+\mu)}\left(t
-\sigma_h(rh)\right)_h^{(\nu+\mu-1)}f(rh)
=\left({_a}\Delta_h^{-(\mu+\nu)}f\right)(t),
\end{align*}
which shows the intended equality.
\end{proof}

The next theorem is crucial in order to solve some
fractional $h$-difference Euler--Lagrange
equations (see the examples in Section~\ref{sec3}).

\begin{thm}
\label{thm2}
Let $0<\alpha\leq 1$ and $f\in\mathcal{F}_\mathbb{T}$. Then,
\begin{equation}
\label{eq0}
\left(_h\Delta_b^\alpha f\right)(t)=0,\quad
t\in\{a-(1-\alpha)h,a-(1-\alpha)h+h,\ldots,\rho_h(b)-(1-\alpha)h\},
\end{equation}
if and only if
\begin{equation}
\label{eq1}
f(t)=\frac{c}{\Gamma({\alpha})}(b-(1-\alpha)h-t)_h^{(\alpha-1)},
\quad t\in\{a,a+h,\ldots,b\},
\end{equation}
where $c$ is an arbitrary constant.
\end{thm}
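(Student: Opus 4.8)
The plan is to prove both implications by unravelling the definitions and then invoking the explicit computations already available, namely Corollary~\ref{cor1} together with the product/summation rule of Theorem~\ref{thm1} (in its right-sided form, which is dual to the stated left-sided version).

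For the ``if'' direction I would start from \eqref{eq1} and compute $\left(_h\Delta_b^\alpha f\right)(t) = -\left(\Delta_h\, _h\Delta_b^{-\gamma} f\right)(t)$ with $\gamma = 1-\alpha$. The key observation is that, up to the constant factor $c/\Gamma(\alpha)$, the function $f$ is precisely $s\mapsto (b-(1-\alpha)h - s)_h^{(\alpha-1)}$, which matches the kernel appearing in Corollary~\ref{cor1} with the choice $\mu = (1-\alpha)h = \gamma h$, so that $-\mu/h = \alpha - 1$. Applying that corollary with $\nu = \gamma$ gives $\left(_h\Delta_b^{-\gamma} f\right)(t) = \frac{c}{\Gamma(\alpha)}\cdot\frac{\Gamma(\alpha)}{\Gamma(1)}\,(b - (1-\alpha)h - \gamma h - t)_h^{(0)} = c\,(b - h - t)_h^{(0)}$, using $\Gamma(1)=1$ and $\alpha - 1 + \gamma = 0$. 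But $(b-h-t)_h^{(0)} = h^0 \Gamma(\tfrac{b-h-t}{h}+1)/\Gamma(\tfrac{b-h-t}{h}+1) = 1$ for every admissible $t$ (it is $\equiv 1$ wherever it is not at a pole), hence $_h\Delta_b^{-\gamma} f$ is constant on its domain, and therefore its forward $h$-difference vanishes, giving \eqref{eq0}.

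For the ``only if'' direction I would argue that $\left(_h\Delta_b^\alpha f\right)(t) = -\left(\Delta_h\, _h\Delta_b^{-\gamma} f\right)(t) = 0$ on the indicated set of $t$ forces $_h\Delta_b^{-\gamma} f$ to be constant, say equal to some $c$, on $\{a-\gamma h, a-\gamma h + h, \ldots, b - \gamma h\}$ (the full domain $\tilde{\mathbb{T}}_\gamma^-$ of $_h\Delta_b^{-\gamma} f$). Then I would apply the right fractional $h$-sum $_h\Delta_{b-\gamma h}^{-\alpha}$ to both sides and use the law of exponents \eqref{ponto2} of Theorem~\ref{thm0} with $\mu = \alpha$, $\nu = \gamma$, $\mu + \nu = 1$: the left-hand side becomes $\left(_h\Delta_b^{-1} f\right)(t)$, while the right-hand side is $c$ times $_h\Delta_{b-\gamma h}^{-\alpha}$ applied to the constant function $1$, which by Corollary~\ref{cor1} (with $\mu = 0$, so the kernel is $(b-\gamma h - s)_h^{(0)} \equiv 1$, and $\nu = \alpha$) equals $\frac{c}{\Gamma(\alpha+1)}(b - \gamma h - t)_h^{(\alpha)}$. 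Finally I would recover $f$ itself by applying the ordinary forward difference $\Delta_h$ to $_h\Delta_b^{-1} f$ — noting $\left(\Delta_h\, _h\Delta_b^{-1} f\right)(t) = -f(t)$ from the definition of the first-order right sum — and differentiating the closed form on the right, using the elementary $h$-factorial identity $\Delta_h\bigl[s\mapsto (c_0 - s)_h^{(\beta)}\bigr](t) = -\beta\,(c_0 - h - t)_h^{(\beta-1)}$ (a direct computation from the Gamma-function definition), which yields $f(t) = \frac{c}{\Gamma(\alpha)}(b - h - \gamma h + \text{shift})_h^{(\alpha-1)}$; after tidying the shift one lands on exactly \eqref{eq1}, possibly after renaming the constant.

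The main obstacle I anticipate is purely bookkeeping: keeping the shifted base-points of the $h$-factorial terms and the index ranges of the right-sided sums correct through the composition, since the right operators translate the domain \emph{downward} by $\nu h$ and Corollary~\ref{cor1}'s kernel already carries an internal shift by $-\mu$. In particular one must check that ``$_h\Delta_b^{-\gamma} f$ is constant on all of $\tilde{\mathbb{T}}_\gamma^-$'' genuinely follows from the vanishing of $\Delta_h$ only on the stated $t$-range — i.e. that the $t$-range in \eqref{eq0} is exactly $\{a-\gamma h,\ldots,\rho_h(b)-\gamma h\}$, one point short of the full domain of $_h\Delta_b^{-\gamma} f$, which is precisely what a first difference needs to pin down a constant. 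The other mild subtlety is the convention ``division at a pole yields zero'': one should confirm that none of the $(\,\cdot\,)_h^{(0)}$ and $(\,\cdot\,)_h^{(\alpha-1)}$ factors land at a pole on the relevant lattice points, so that the formal manipulations are valid everywhere they are claimed.
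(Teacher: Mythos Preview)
Your approach is essentially identical to the paper's: the ``if'' direction is Corollary~\ref{cor1}, and the ``only if'' direction composes with $_h\Delta_{b-\gamma h}^{-\alpha}$ via \eqref{ponto2} of Theorem~\ref{thm0} and then applies $\Delta_h$. The only points to tighten are that $(\Delta_h\,{_h}\Delta_b^{-1}f)(t) = -f(\sigma_h(t))$ (not $-f(t)$; this is precisely the ``shift'' you anticipate), and that this differentiation recovers \eqref{eq1} only for $t\in\{a,\ldots,b-h\}$ --- the paper closes the case $t=b$ separately by evaluating \eqref{eq3} at $t=b-h$ to read off $f(b)h=ch^\alpha$.
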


\begin{proof}
If $f$ is given as in \eqref{eq1}, we immediately get \eqref{eq0}
using Corollary~\ref{cor1}. Suppose now that equality in
\eqref{eq0} holds in the mentioned domain. Then, by definition of
fractional difference,
\begin{equation}
\label{eq2}
\left(_h\Delta_b^{-(1-\alpha)}f\right)(t)=c,\quad
t\in\{a-(1-\alpha)h,a+h-(1-\alpha)h,\ldots,b-(1-\alpha)h\},\quad
c\in\mathbb{R}.
\end{equation}
Applying the operator $_h\Delta_{b-(1-\alpha)h}^{-\alpha}$ to both
sides of equality in \eqref{eq2}, and using \eqref{ponto2} of
Theorem~\ref{thm0}, we get
$\left(_h\Delta_{b}^{-1}f\right)(t)
=c\left(_h\Delta_{b-(1-\alpha)h}^{-\alpha}1\right)(t)$,
$t\in\{a-h,a,\ldots,b-h\}$. Corollary~\ref{cor1} now implies that
\begin{equation}
\label{eq3}
\left(_h\Delta_{b}^{-1}f\right)(t)
=\frac{c}{\Gamma({\alpha+1})}(b-(1-\alpha)h-t)_h^{(\alpha)},
\quad t\in\{a-h,a,\ldots,b-h\}.
\end{equation}
An application of the operator $\Delta_h$ to both sides of the
equality in \eqref{eq3} gives
$$
f(\sigma_h(t))
=c\frac{\alpha}{\Gamma({\alpha+1})}(b
-(1-\alpha)h-\sigma_h(t))_h^{(\alpha-1)},
\quad t\in\{a-h,a,\ldots,b-2h\},
$$
or
\begin{equation}
\label{eq4}
f(t)=\frac{c}{\Gamma({\alpha})}(b-(1-\alpha)h-t)_h^{(\alpha-1)},
\quad t\in\{a,a+h,\ldots,b-h\}.
\end{equation}
Setting $t=b-h$ in \eqref{eq3} we get $f(b)h=ch^\alpha$, \textrm{i.e.},
equality in \eqref{eq4} is also valid when $t=b$.
\end{proof}

\begin{rem}
\label{rem1}
Similar steps as those done in the proof of Theorem~\ref{thm2}
permit us to prove the following equivalence: for $0<\alpha\leq
1$, $c\in\mathbb{R}$, and $f\in\mathcal{F}_\mathbb{T}$, we have
\begin{equation*}
\left(_h\Delta_b^\alpha f\right)(t)=c,
\quad t\in\{a-(1-\alpha)h,a+h-(1-\alpha)h,\ldots,b-h-(1-\alpha)h\}
\end{equation*}
if and only if
\begin{equation*}
f(t)=\frac{c(b+\alpha
h-b\alpha-\alpha^2h-t)+d\alpha}{\Gamma({\alpha}+1)}(b-(1-\alpha)h-t)_h^{(\alpha-1)},
\quad t\in\{a,a+h,\ldots,b\},
\end{equation*}
where $d$ is an arbitrary constant. Indeed, from Corollary~\ref{cor1}
and Theorem~\ref{thm0}, we have
\begin{equation*}
\begin{split}
&\bigl({_h\Delta_b^\alpha} f\bigr)(t)=c\Leftrightarrow
\bigl({_h\Delta_b}^{-(1-\alpha)} f\bigr)(t)=-ct+d\\
&\Leftrightarrow \left(_h\Delta_b^{-(1-\alpha)} f\right)(t)=c(b+\alpha h-t-b-\alpha h)+d\\
&\Leftrightarrow \left(_h\Delta_b^{-(1-\alpha)} f\right)(t)=c(b+\alpha h-t)-c(b+\alpha h)+d\\
&\Leftrightarrow \left({_h}\Delta_{b-(1-\alpha)h}^{-\alpha}{_h}\Delta_b^{-(1-\alpha)} f\right)(t)
=c\left({_h}\Delta_{b-(1-\alpha)h}^{-\alpha} s \mapsto b+\alpha h-s\right)(t)\\
&\qquad -[c(b+\alpha h)-d]\left({_h}\Delta_{b-(1-\alpha)h}^{-\alpha}1\right)(t)\\
&\Leftrightarrow \left({_h}\Delta_{b}^{-1}
f\right)(t)=\frac{c}{\Gamma{(\alpha+2)}}(b+\alpha
h-t)^{(\alpha+1)}_h-\frac{c(b+\alpha
h)-d}{\Gamma{(\alpha+1)}}(b-(1-\alpha)
h-t)^{(\alpha)}_h\\
&\Leftrightarrow f(t)=\frac{c}{\Gamma{(\alpha+1)}}(b+\alpha
h-t)^{(\alpha)}_h-\frac{c(b+\alpha
h)-d}{\Gamma{(\alpha)}}(b-(1-\alpha)
h-t)^{(\alpha-1)}_h\\
&\Leftrightarrow
f(t)=\frac{c}{\Gamma{(\alpha+1)}}h^\alpha\frac{\Gamma\left(\frac{b+\alpha
h-t}{h}+1\right)}{\Gamma\left(\frac{b+\alpha
h-t}{h}+1-\alpha\right)}-\frac{c(b+\alpha
h)-d}{\Gamma{(\alpha)}}(b-(1-\alpha)
h-t)^{(\alpha-1)}_h\\
&\Leftrightarrow f(t)=\frac{c}{\Gamma{(\alpha+1)}}(b+\alpha
h-t)h^{\alpha-1}\frac{\Gamma\left(\frac{b-(1-\alpha)
h-t}{h}+1\right)}{\Gamma\left(\frac{b-(1-\alpha)
h-t}{h}+1-(\alpha-1)\right)}\\
&\qquad -\frac{c(b+\alpha h)-d}{\Gamma{(\alpha)}}(b-(1-\alpha)
h-t)^{(\alpha-1)}_h\\
&\Leftrightarrow f(t)=\frac{c(b+\alpha
h-b\alpha-\alpha^2h-t)+d\alpha}{\Gamma({\alpha}+1)}(b-(1-\alpha)h-t)_h^{(\alpha-1)}.
\end{split}
\end{equation*}
\end{rem}

We end this section enunciating the analogue of Theorem~\ref{thm2}
for the left fractional $h$-difference.

\begin{thm}
\label{thm4}
Let $0<\alpha\leq 1$ and $f\in\mathcal{F}_\mathbb{T}$. Then,
\begin{equation*}
\left(_a\Delta_h^\alpha f\right)(t)=0,
\quad t\in\{a+(1-\alpha)h,a+(1-\alpha)h+h,\ldots,\rho_h(b)+(1-\alpha)h\},
\end{equation*}
if and only if
\begin{equation*}
f(t)=\frac{c}{\Gamma({\alpha})}(t-(1-\alpha)h-a)_h^{(\alpha-1)},
\quad t\in\{a,a+h,\ldots,b\},
\end{equation*}
where $c$ is an arbitrary constant.
\end{thm}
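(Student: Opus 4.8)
The plan is to mirror the proof of Theorem~\ref{thm2}, working with the left operators and Corollary~\ref{cor0} in place of the right operators and Corollary~\ref{cor1}. First I would observe that the ``if'' direction is immediate: if $f$ is given by the stated formula, then applying Corollary~\ref{cor0} with $\mu = -(1-\alpha)h$ (so that $\mu/h = \alpha-1$ and $f(t) = \frac{c}{\Gamma(\alpha)}(t - a + \mu)_h^{(\mu/h)}$) together with the fact that $\Delta_h$ annihilates a constant yields $\left(_a\Delta_h^\alpha f\right)(t) = 0$ on the indicated domain.

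For the ``only if'' direction, I would assume $\left(_a\Delta_h^\alpha f\right)(t) = 0$ on $\{a+(1-\alpha)h,\ldots,\rho_h(b)+(1-\alpha)h\}$. By Definition~\ref{def1}, $\left(_a\Delta_h^\alpha f\right)(t) = \left(\Delta_h {_a}\Delta_h^{-(1-\alpha)} f\right)(t)$, so the hypothesis forces $\left({_a}\Delta_h^{-(1-\alpha)} f\right)(t) = c$ for some constant $c$ on the domain $\{a+(1-\alpha)h, a+(1-\alpha)h+h, \ldots, b+(1-\alpha)h\}$. I would then apply the operator $_{a+(1-\alpha)h}\Delta_h^{-\alpha}$ to both sides and invoke \eqref{ponto1} of Theorem~\ref{thm0} (with $\nu = 1-\alpha$, $\mu = \alpha$, so $\mu+\nu = 1$) to obtain $\left({_a}\Delta_h^{-1} f\right)(t) = c\left(_{a+(1-\alpha)h}\Delta_h^{-\alpha} 1\right)(t)$. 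Evaluating the right-hand side via Corollary~\ref{cor0} (the constant $1$ being $(s - a + \mu)_h^{(0)}$ with the appropriate $\mu$) gives
\begin{equation*}
\left({_a}\Delta_h^{-1} f\right)(t) = \frac{c}{\Gamma(\alpha+1)}(t - (1-\alpha)h - a)_h^{(\alpha)}, \quad t\in\{a+h,a+2h,\ldots,b+h\}.
\end{equation*}
Applying $\Delta_h$ to both sides and using that $\Delta_h$ of $(t - (1-\alpha)h - a)_h^{(\alpha)}$ produces a factor $\alpha$ times $(\cdot)_h^{(\alpha-1)}$ (this is the routine $h$-factorial difference rule, which can be checked directly from the definition of $x_h^{(y)}$ or extracted from the computations already used in the paper), one recovers $f(\sigma_h(t)) = \frac{c}{\Gamma(\alpha)}(\sigma_h(t) - (1-\alpha)h - a)_h^{(\alpha-1)}$, i.e. the claimed formula for $f$ on $\{a, a+h, \ldots, b-h\}$.

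The remaining point, exactly as in Theorem~\ref{thm2}, is to recover the endpoint value $t = b$, which the application of $\Delta_h$ does not directly reach. For this I would set $t = b$ in the formula for $\left({_a}\Delta_h^{-1} f\right)$ above (evaluating at the largest point of its domain, $t = b$, or alternatively comparing $\left({_a}\Delta_h^{-1} f\right)(b)$ and $\left({_a}\Delta_h^{-1} f\right)(b-h)$ whose difference is $f(b-h)h$ — here one must be careful that $_a\Delta_h^{-1}$ is just the ordinary sum, so $\left({_a}\Delta_h^{-1} f\right)(t) = \sum_{k=a/h}^{t/h-1} f(kh)h$), which isolates $f(b)h$ and shows it equals $\frac{c}{\Gamma(\alpha)}(b - (1-\alpha)h - b)_h^{(\alpha-1)} h = \frac{c}{\Gamma(\alpha)}((\alpha-1)h)_h^{(\alpha-1)} h$, matching the stated formula at $t = b$. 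I do not expect a genuine obstacle here: the entire argument is the direct ``left'' transcription of Theorem~\ref{thm2}, and the only mildly delicate bookkeeping is keeping the shifted domains $\{a+\nu h, \ldots\}$ straight when composing the fractional sums and making sure the endpoint $t=b$ is handled separately rather than lost in the $\Delta_h$ step.
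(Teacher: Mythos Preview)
Your approach is exactly the one the paper has in mind (the paper's proof reads simply ``analogous to the one of Theorem~\ref{thm2}''), and the structure---Corollary~\ref{cor0} for the ``if'' direction, then antidifferentiate, apply \eqref{ponto1} of Theorem~\ref{thm0}, invoke Corollary~\ref{cor0} on the constant, and finally apply $\Delta_h$---is correct.

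There is, however, a bookkeeping slip in your endpoint analysis that you should fix. For the \emph{left} sum one has $\left({_a}\Delta_h^{-1}f\right)(t)=\sum_{k=a/h}^{t/h-1}f(kh)h$, so $\Delta_h\!\left({_a}\Delta_h^{-1}f\right)(t)=f(t)$, not $f(\sigma_h(t))$. After Theorem~\ref{thm0} the identity $\left({_a}\Delta_h^{-1}f\right)(t)=\frac{c}{\Gamma(\alpha+1)}(t-(1-\alpha)h-a)_h^{(\alpha)}$ holds on $\{a+h,\ldots,b+h\}$, and applying $\Delta_h$ therefore recovers the claimed formula for $f$ on $\{a+h,\ldots,b\}$, which \emph{already includes} $t=b$. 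The point that is missing is $t=a$, not $t=b$; this is the mirror of Theorem~\ref{thm2}, where the right operators leave $t=b$ uncovered. To handle $t=a$, set $t=a+h$ in the identity for ${_a}\Delta_h^{-1}f$: the left side is $f(a)h$ and the right side is $\frac{c}{\Gamma(\alpha+1)}(\alpha h)_h^{(\alpha)}=ch^{\alpha}$, so $f(a)=ch^{\alpha-1}$, which agrees with $\frac{c}{\Gamma(\alpha)}((\alpha-1)h)_h^{(\alpha-1)}$. Your attempt to ``isolate $f(b)h$'' by evaluating at $t=b$ cannot work, since $\left({_a}\Delta_h^{-1}f\right)(b)$ does not involve $f(b)$.
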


\begin{proof}
The proof is analogous to the one of Theorem~\ref{thm2}.
\end{proof}


\section{Applications to the Calculus of Variations}
\label{sec3}

We now give two examples of application of our results.
The main achievement is to obtain explicit solutions for
some problems of the calculus of variations.
In this section we omit the subscript $h$ in $\sigma_h$ and $\rho_h$.
For convenience of notation we write $y^\sigma(t) = y(\sigma(t))$.

\begin{ex}
\label{eq:ex:1}
Let us consider the following data: let $a\in\mathbb{R}$, $h > 0$,
$b=a+kh$ with $k\in\{2,3,\ldots\}$, and $0<\alpha\leq 1$.
Moreover, let $A$ and $B$ be to given real numbers. We want to
find a function $y\in\mathcal{F}_\mathbb{T}$ that solves the problem
\begin{equation}
\label{EL:ex1}
\mathcal{L}(y)
=\sum_{t=\frac{a}{h}}^{\frac{b}{h}-1} \left({_a}\Delta_h^{\alpha}
y\right)^2(th)h \longrightarrow \min \, ,
\quad y(a)=A \, , \quad y(b)=B \, .
\end{equation}
By \cite[Theorem~3.5]{BFT:SP} we have that if $\hat{y}$ is a minimizer
of $\mathcal{L}$ given in \eqref{EL:ex1}, then
\begin{equation}
\label{eq5}
\left(_h\Delta_{\rho(b)}^\alpha{_a}\Delta_{h}^\alpha\hat{y}\right)(t)=0,
\quad t\in\{a,a+h,\ldots,b-2h\}.
\end{equation}

\begin{rem}
\label{rem0}
At a first glance the sum in \eqref{EL:ex1} and the equation in
\eqref{eq5} seem to be meaningless due to the possible values of
the variable $t$. However, they aren't by the fact that the
authors in \cite{BFT:SP} used the following notation for the
difference operators:
\begin{align*}
\left({_a}\Delta_{h}^\alpha f\right)(t)&=\left(_a\Delta_{h}^\alpha
f\right)(t+(1-\alpha)h),\quad t\in\{a,a+h,\ldots,b-h\},\\
\left({_h}\Delta_{b}^\alpha f\right)(t)&=\left(_h\Delta_{b}^\alpha
f\right)(t-(1-\alpha)h),\quad t\in\{a,a+h,\ldots,b-h\}.
\end{align*}
\end{rem}
An application of our Theorem~\ref{thm2} to the equality in \eqref{eq5} gives
$$
\left({_a}\Delta_{h}^\alpha\hat{y}\right)(t)
=\frac{c}{\Gamma({\alpha})}(\rho(b)-(1-\alpha)h-t)_h^{(\alpha-1)},
\quad t\in\{a,a+h,\ldots,\rho(b)\},
$$
with $c\in\mathbb{R}$ or
$$
\left(\Delta_h{_a}\Delta_{h}^{1-\alpha}\hat{y}\right)(t)
=\frac{c}{\Gamma({\alpha})}(\rho(b)-(t+(1-\alpha)h))_h^{(\alpha-1)},
\quad t\in\{a,a+h,\ldots,\rho(b)\}.
$$
We now remember Remark~\ref{rem0} and apply the operator
$_{a+(1-\alpha)h}\Delta^{-\alpha}_h$ to both
sides of this equality. From Theorems~\ref{thm1} and \ref{thm0} it follows that
\begin{multline}
\label{eq6}
\left(_{a+(1-\alpha)h}\Delta^{-\alpha}_h\Delta_h{_a}\Delta_{h}^{1-\alpha}\hat{y}\right)(t)
=\frac{c}{\Gamma({\alpha})}\left(
{_{a+(1-\alpha)h}}\Delta^{-\alpha}_h  s \mapsto (\rho(b)-s)_h^{(\alpha-1)}\right)(t)\\
\Leftrightarrow \hat{y}(t)=\frac{c}{\Gamma({\alpha})}\left({_{a
+(1-\alpha)h}}\Delta^{-\alpha}_h s \mapsto (\rho(b)-s)_h^{(\alpha-1)}\right)(t)\\
+\frac{1}{\Gamma(\alpha)}(t-(a+(1-\alpha)h))_h^{(\alpha-1)}\hat{y}(a),
\end{multline}
with $t\in\{a+h,a+2h,\ldots,b\}$.
The constant $c$ is determined by the end condition $\hat{y}(b)=B$.

\begin{rem}
We point out that if $\alpha=1$ we get the ``straight line''
connecting the points $(a,A)$ and $(b,B)$ as the solution of the
Euler--Lagrange equation \eqref{eq6}, \textrm{i.e.},
$\hat{y}(t)=\frac{B-A}{b-a}(t-a)+A$. This result
can be found, \textrm{e.g.}, in \cite{Bohner,Gusein}.
\end{rem}

We now show that the function $\hat{y}$ given by
\eqref{eq6} furnishes in fact a global minimum to the problem
\eqref{EL:ex1}. To do that, we recall the fractional
$h$-summation by parts formula obtained by the authors in \cite{BFT:SP}
(we continue to use here the notation mentioned in Remark~\ref{rem0}).

\begin{thm}[Theorem~3.2 of \cite{BFT:SP}]
\label{thm3}
Let $f$ and $g$ be real valued functions defined on $\{a,a+h,\ldots,b-h\}$
and $\{a,a+h,\ldots,b\}$, respectively. Fix $0<\alpha\leq 1$ and put
$\gamma= 1-\alpha$. Then,
\begin{multline*}
\sum_{t=\frac{a}{h}}^{\frac{b}{h}-1}f(th)\left(_a\Delta_h^\alpha
g\right)(th)h=h^\gamma f(\rho(b))g(b)-h^\gamma
f(a)g(a)+\sum_{t=\frac{a}{h}}^{\frac{b}{h}-2}\left(_h\Delta_{\rho(b)}^\alpha
f\right)(th)g^\sigma(th)h\\
+\frac{\gamma \, g(a)}{\Gamma(\gamma+1)}\left(\sum_{t=\frac{a}{h}}^{\frac{b}{h}-1}(th+\gamma
h-a)_h^{(\gamma-1)}f(th)h-\sum_{t=\frac{\sigma(a)}{h}}^{\frac{b}{h}-1}(th+\gamma
h-\sigma(a))_h^{(\gamma-1)}f(th)h\right).
\end{multline*}
\end{thm}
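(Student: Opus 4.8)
The plan is to reduce the statement to the \emph{integer-order} $h$-summation by parts, using Theorem~\ref{thm1} to move the fractional difference ${_a}\Delta_h^\alpha$ off $g$ and the definitions of the fractional $h$-sums to convert a left sum into a right one. Write $\gamma=1-\alpha$ and work throughout with the notation of Remark~\ref{rem0}. First I would apply Theorem~\ref{thm1} to the function $g$ with $\nu=\gamma$, evaluated at the point $(t+\gamma)h$, which gives that $\left(_a\Delta_h^\alpha g\right)(th)$ equals $\left(_a\Delta_h^{-\gamma}\Delta_h g\right)\bigl((t+\gamma)h\bigr)+\frac{\gamma}{\Gamma(\gamma+1)}\bigl((t+\gamma)h-a\bigr)_h^{(\gamma-1)}g(a)$. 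Multiplying by $f(th)\,h$ and summing over $t=\frac{a}{h},\dots,\frac{b}{h}-1$ splits the left-hand side of the theorem as $S_1+\frac{\gamma\, g(a)}{\Gamma(\gamma+1)}\sum_{t}(th+\gamma h-a)_h^{(\gamma-1)}f(th)\,h$, where the second summand is already the \emph{first} of the two sums appearing in the claimed correction term and $S_1:=\sum_{t}f(th)\left(_a\Delta_h^{-\gamma}\Delta_h g\right)\bigl((t+\gamma)h\bigr)h$.

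Next I would expand $S_1$ by the definition of ${_a}\Delta_h^{-\gamma}$, interchange the order of the resulting double sum (finite, so the interchange is legitimate), and reindex. The key point is that after the interchange the inner sum over the $f$-variable has exactly the range and kernel of the right fractional $h$-sum with upper endpoint $\rho(b)$, so that $S_1=h\sum_{k}(\Delta_h g)(kh)\left(_h\Delta_{\rho(b)}^{-\gamma}f\right)\bigl((k-\gamma)h\bigr)$; it is the presence of $\rho(b)$, rather than $b$, that makes the summation limits close up. I would then apply the ordinary $h$-summation by parts in $k$ to transfer $\Delta_h$ from $g$ onto ${_h}\Delta_{\rho(b)}^{-\gamma}f$. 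Since $\left(_h\Delta_{\rho(b)}^\alpha f\right)(\tau)=-\left(\Delta_h{_h}\Delta_{\rho(b)}^{-\gamma}f\right)(\tau)$ and, in the notation of Remark~\ref{rem0}, this amounts to a shift by $\gamma h$, the interior sum so produced is precisely $\sum_{t}\left(_h\Delta_{\rho(b)}^\alpha f\right)(th)\,g^\sigma(th)\,h$.

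It then remains to evaluate the two boundary terms of that last summation by parts. At the endpoint near $b$ the sum defining ${_h}\Delta_{\rho(b)}^{-\gamma}f$ degenerates to a single term, which equals $h^\gamma f(\rho(b))$ by the identity $((\gamma-1)h)_h^{(\gamma-1)}=h^{\gamma-1}\Gamma(\gamma)$ (a direct computation from the definition of the $h$-factorial function), contributing $h^\gamma f(\rho(b))g(b)$. At the endpoint near $a$, isolating inside ${_h}\Delta_{\rho(b)}^{-\gamma}f$ the summand with index $\frac{a}{h}$ (again simplified via $((\gamma-1)h)_h^{(\gamma-1)}=h^{\gamma-1}\Gamma(\gamma)$) splits that boundary term into $-h^\gamma f(a)g(a)$ together with $-\frac{g(a)}{\Gamma(\gamma)}\sum_{t=\frac{\sigma(a)}{h}}^{\frac{b}{h}-1}(th+\gamma h-\sigma(a))_h^{(\gamma-1)}f(th)\,h$, that is, minus the \emph{second} of the two correction sums. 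Collecting the pieces and using $\frac{1}{\Gamma(\gamma)}=\frac{\gamma}{\Gamma(\gamma+1)}$ then yields exactly the asserted identity.

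The main obstacle is the index bookkeeping in the middle step: the uniform shift by $\gamma h$ in all summation limits (forced by the convention of Remark~\ref{rem0}), the interchange of summation, and the two reindexings must be carried out so that the inner sum closes up \emph{as written} into the right fractional $h$-sum ${_h}\Delta_{\rho(b)}^{-\gamma}$; adding or dropping one term at either end would break the match, and it is exactly at the $\frac{a}{h}$-edge that the $-h^\gamma f(a)g(a)$ term separates off. A convenient sanity check is the case $\alpha=1$, $\gamma=0$: then $h^\gamma=1$, Theorem~\ref{thm1} contributes nothing, the whole correction term vanishes, and one must recover the classical discrete summation by parts $\sum f(th)\left(\Delta_h g\right)(th)h=f(\rho(b))g(b)-f(a)g(a)-\sum\left(\Delta_h f\right)(th)\,g^\sigma(th)\,h$.
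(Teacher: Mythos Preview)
The paper does not actually prove this statement: it is quoted verbatim as Theorem~3.2 of \cite{BFT:SP} and invoked as a tool, so there is no ``paper's own proof'' to compare against. That said, your outline is a correct proof. The application of Theorem~\ref{thm1} produces exactly the first correction sum; expanding the remaining piece $S_1$ via the definition of ${_a}\Delta_h^{-\gamma}$, interchanging the finite double sum, and reading the inner sum as $\bigl({_h}\Delta_{\rho(b)}^{-\gamma}f\bigr)\bigl((k-\gamma)h\bigr)$ works as stated (the upper limit $b/h-1$ is what forces the right endpoint $\rho(b)$). The integer-order summation by parts then yields the boundary term $F(b/h-1)g(b)=h^\gamma f(\rho(b))g(b)$ from the single-term sum, while at $k=a/h$ splitting off the $j=a/h$ summand gives $-h^\gamma f(a)g(a)$ and the second correction sum with coefficient $1/\Gamma(\gamma)=\gamma/\Gamma(\gamma+1)$; the interior sum becomes $\sum_{t}\bigl({_h}\Delta_{\rho(b)}^\alpha f\bigr)(th)g^\sigma(th)h$ via Definition~\ref{def1} and the shift in Remark~\ref{rem0}. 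Your $\alpha=1$ sanity check is also correct. The only thing to be careful about when writing it up in full is that $\bigl({_h}\Delta_{\rho(b)}^{-\gamma}f\bigr)\bigl((k-\gamma)h\bigr)$ is a priori defined only for $k\le b/h-1$, but since your version of summation by parts evaluates $F$ at $b/h-1$ (not $b/h$) this causes no difficulty.
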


Before proceeding, we need the following definition:

\begin{defin}
We say that a Lagrangian
$L(t,u,v):\mathbb{T}\times\mathbb{R}^2\rightarrow\mathbb{R}$
is jointly convex in $(u,v)$ if
$$
L(t,u,v)-L(t,u',v')\geq(u-u')L_u(t,u',v')+(v-v')L_v(t,u',v'),
\quad u,u',v,v'\in\mathbb{R},
$$
provided the partial derivatives $L_u$ and $L_v$ exist.
\end{defin}

We are now able to prove the following theorem.

\begin{thm}
Consider the set $S=\{f\in\mathcal{F}_\mathbb{T}:f(a)=A,f(b)=B\}$.
Suppose that the Lagrangian
$L(t,u,v):\mathbb{T}\times\mathbb{R}^2\rightarrow\mathbb{R}$
of the minimization problem
$$
\mathcal{L}(y)=
\sum_{t=\frac{a}{h}}^{\frac{b}{h}-1}
L(th,y^\sigma(th),\left({_a}\Delta_h^{\alpha} y\right)(th))h
\longrightarrow \min \, , \quad y(a)=A \, , \quad y(b)=B,
$$
is jointly convex in $(u,v)$. Assume that the function $\hat{y}\in S$
satisfies the Euler--Lagrange equation for this problem, \textrm{i.e.},
\begin{equation}
\label{eq7}
L_u[\hat{y}](t)+\left(_h\Delta_{\rho(b)}^\alpha
L_v[\hat{y}]\right)(t)=0,\quad t\in\{a,a+h,\ldots,b-2h\},
\end{equation}
where $[y](s)=(s,y^\sigma(s),\left({_a}\Delta_h^{\alpha} y\right)(s)))$.
Then, $\hat{y}$ furnishes a global minimum to $\mathcal{L}$ in the set $S$.
\end{thm}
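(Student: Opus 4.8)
The plan is a standard convexity argument adapted to the fractional $h$-difference setting. Fix an arbitrary $y\in S$ and set $\eta=y-\hat y\in\mathcal{F}_\mathbb{T}$, so that $\eta(a)=\eta(b)=0$. Since the operator $_a\Delta_h^\alpha$ is linear, $\left({_a}\Delta_h^\alpha y\right)(t)-\left({_a}\Delta_h^\alpha\hat y\right)(t)=\left({_a}\Delta_h^\alpha\eta\right)(t)$. Applying the joint convexity hypothesis at each point $th$, $t\in\{a/h,\ldots,b/h-1\}$, with $(u,v)=\bigl(y^\sigma(th),({_a}\Delta_h^\alpha y)(th)\bigr)$ and $(u',v')=\bigl(\hat y^\sigma(th),({_a}\Delta_h^\alpha\hat y)(th)\bigr)$, multiplying by $h$ and summing over $t$, I obtain
\[
\mathcal{L}(y)-\mathcal{L}(\hat y)\geq\sum_{t=\frac{a}{h}}^{\frac{b}{h}-1}\eta^\sigma(th)\,L_u[\hat y](th)\,h
+\sum_{t=\frac{a}{h}}^{\frac{b}{h}-1}\left({_a}\Delta_h^\alpha\eta\right)(th)\,L_v[\hat y](th)\,h.
\]

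Next I would transform the second sum by means of the fractional $h$-summation by parts formula of Theorem~\ref{thm3}, taking $f=L_v[\hat y]$ — which is indeed defined on $\{a,\ldots,b-h\}$ — and $g=\eta$, defined on $\{a,\ldots,b\}$. Because $\eta(a)=\eta(b)=0$, the two boundary terms $h^\gamma f(\rho(b))g(b)$ and $h^\gamma f(a)g(a)$ vanish, and the entire last bracket of the formula, being multiplied by $g(a)$, vanishes as well. Hence the second sum above equals $\sum_{t=a/h}^{b/h-2}\left({_h}\Delta_{\rho(b)}^\alpha L_v[\hat y]\right)(th)\,\eta^\sigma(th)\,h$.

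Finally I would combine the two displays. In the first sum the term corresponding to $t=b/h-1$ is $\eta^\sigma(\rho(b))\,L_u[\hat y](\rho(b))\,h=\eta(b)\,L_u[\hat y](\rho(b))\,h=0$, so it may be restricted to $t\in\{a/h,\ldots,b/h-2\}$. This yields
\[
\mathcal{L}(y)-\mathcal{L}(\hat y)\geq\sum_{t=\frac{a}{h}}^{\frac{b}{h}-2}\Bigl(L_u[\hat y](th)+\left({_h}\Delta_{\rho(b)}^\alpha L_v[\hat y]\right)(th)\Bigr)\eta^\sigma(th)\,h=0,
\]
the last equality being precisely the Euler--Lagrange equation \eqref{eq7}. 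Since $y\in S$ was arbitrary, $\hat y$ furnishes a global minimum of $\mathcal{L}$ on $S$. I do not expect a genuine obstacle here; the only point requiring care is the bookkeeping in the summation by parts step — checking that Theorem~\ref{thm3} applies with the stated $f$ and $g$, and that all boundary and remainder terms collapse thanks to $\eta$ vanishing at $a$ and $b$.
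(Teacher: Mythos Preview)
Your argument is correct and follows essentially the same route as the paper's proof: apply joint convexity pointwise, invoke the fractional $h$-summation by parts formula (Theorem~\ref{thm3}) with $f=L_v[\hat y]$ and $g=\eta=y-\hat y$, and use $\eta(a)=\eta(b)=0$ together with the Euler--Lagrange equation to collapse everything to zero. Your write-up is in fact more explicit than the paper's about why the boundary and remainder terms vanish and why the $t=b/h-1$ term in the first sum drops out.
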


\begin{proof}
Let $y\in S$ be an arbitrary function. Suppose that $\hat{y}\in S$
satisfies equation in \eqref{eq7}. Since $L(t,u,v)$ is jointly
convex in $(u,v)$, we get, with the use of Theorem~\ref{thm3},
\begin{align*}
\sum_{t=\frac{a}{h}}^{\frac{b}{h}-1}\left\{L[y]-L[\hat{y}]\right\}h
&\geq\sum_{t=\frac{a}{h}}^{\frac{b}{h}-1}\left\{(y^\sigma-\hat{y}^\sigma)L_u[\hat{y}]
+(_a\Delta_{h}^\alpha y-{_a}\Delta_{h}^\alpha\hat{y})L_v[\hat{y}]\right\}h\\
&=\sum_{t=\frac{a}{h}}^{\frac{b}{h}-2}\left\{(y^\sigma-\hat{y}^\sigma)\left(L_u[\hat{y}]
+{_h}\Delta_{\rho(b)}^\alpha L_v[\hat{y}]\right)\right\}h\\
&=0.
\end{align*}
The theorem is proved.
\end{proof}

It is clear that the Lagrangian $L(t,u,v)=v^2$ in \eqref{EL:ex1}
is jointly convex in $(u,v)$. Therefore, the function $\hat{y}$
defined in \eqref{eq6} furnishes a global minimum to \eqref{EL:ex1}.
\end{ex}

We end solving another fractional
difference problem of the calculus of variations.

\begin{ex}
Let $a\in\mathbb{R}$, $h > 0$, $b=a+kh$ with $k\in\{2,3,\ldots\}$, $0<\alpha\leq 1$,
and $A$ and $B$ be two given real numbers. We consider the following
variational problem:
\begin{equation}
\label{eq:ex:2}
\mathcal{L}(y)= \sum_{t=\frac{a}{h}}^{\frac{b}{h}-1}
\left[\frac{1}{2}\left({_a}\Delta_h^{\alpha}
y\right)^2(th)-y^\sigma(th)\right]h \longrightarrow \min \, ,
\quad y(a)=A \, , \quad y(b)=B \, .
\end{equation}
\end{ex}

The Euler--Lagrange equation for problem \eqref{eq:ex:2} is
\begin{equation}
\label{eq8}
\left(_h\Delta_{\rho(b)}^\alpha{_a}\Delta_{h}^\alpha
y\right)(t)=1,\quad t\in\{a,a+h,\ldots,b-2h\}.
\end{equation}
In view of Remark~\ref{rem1}, we get from equality in \eqref{eq8} that
$$
\left({_a}\Delta_{h}^\alpha
y\right)(t)=\frac{\rho(b)+\alpha
h-\rho(b)\alpha-\alpha^2h-t
+d\alpha}{\Gamma({\alpha}+1)}(\rho(b)-(1-\alpha)h-t)_h^{(\alpha-1)},
$$
for a constant $d\in\mathbb{R}$ to be determined.
Following the same steps as those done for Example~\ref{eq:ex:1}, we get
\begin{multline}
\label{eq:sol:ex:2}
\hat{y}(t)=\left(_{a+(1-\alpha)h}\Delta^{-\alpha}_h
s \mapsto \frac{b-\rho(b)\alpha
-\alpha^2h-s+d\alpha}{\Gamma({\alpha}+1)}(\rho(b)-s)_h^{(\alpha-1)}\right)(t)\\
+\frac{1}{\Gamma(\alpha)}(t-(a+(1-\alpha)h)_h^{(\alpha-1)}y(a),
\end{multline}
for $t\in\{a+h,a+2h,\ldots,b\}$. Finally, we show that the
Lagrangian $L(t,u,v)=v^2-u$ is jointly convex in $(u,v)$.
Indeed, for $u,v,u',v'\in\mathbb{R}$ we have
$$
v^2-u-(v'^2-u')\geq-(u-u')+(v-v')2v'\Leftrightarrow (v-v')^2\geq 0.
$$
We conclude that $\hat{y}$ given by \eqref{eq:sol:ex:2}
is the global minimizer of \eqref{eq:ex:2}.


\section*{Acknowledgments}

The authors were supported by the
\emph{Portuguese Foundation for Science and Technology} (FCT)
through the \emph{Center for Research and Development
in Mathematics and Applications} (CIDMA).
They are very grateful to a referee
for valuable remarks and comments, which
significantly contributed to the quality of the paper.




\begin{thebibliography}{99}

\bibitem{MR2558546}
{\sc R. Almeida, D. F. M. Torres},
{\it Calculus of variations with fractional derivatives and fractional integrals},
Appl. Math. Lett. {\bf 22} (2009), no.~12, 1816--1820.
{\tt arXiv:0907.1024}

\bibitem{MyID:154}
{\sc R. Almeida, D. F. M. Torres},
{\it Leitmann's direct method for fractional optimization problems},
Appl. Math. Comput. {\bf 217} (2010), no.~3, 956--962.
{\tt arXiv:1003.3088}

\bibitem{Almeida}
{\sc R. Almeida, D. F. M. Torres},
{\it Necessary and sufficient conditions for the
fractional calculus of variations with Caputo derivatives},
Commun. Nonlinear Sci. Numer. Simul. {\bf 16} (2011), no.~3, 1490--1500.
{\tt arXiv:1007.2937}

\bibitem{Anas}
{\sc G. A. Anastassiou},
{\it Nabla discrete fractional calculus and nabla inequalities},
Math. Comput. Modelling {\bf 51} (2010), no.~5-6, 562--571.

\bibitem{Atici0}
{\sc F. M. Atici, P. W. Eloe},
{\it A transform method in discrete fractional calculus},
Int. J. Difference Equ. {\bf 2} (2007), no.~2, 165--176.

\bibitem{Atici}
{\sc F. M. Atici, P. W. Eloe},
{\it Initial value problems in discrete fractional calculus},
Proc. Amer. Math. Soc. {\bf 137} (2009), no.~3, 981--989.

\bibitem{Atici:SS}
{\sc F. M. Atici, S. \c Seng\"ul},
{\it Modeling with fractional difference equations},
J. Math. Anal. Appl. {\bf 369} (2010), no.~1, 1--9.

\bibitem{BFT}
{\sc N. R. O. Bastos, R. A. C. Ferreira, D. F. M. Torres},
{\it Necessary optimality conditions for fractional difference
problems of the calculus of variations},
Discrete Contin. Dyn. Syst. {\bf 29} (2011), no.~2, 417--437.
{\tt arXiv:1007.0594}

\bibitem{BFT:SP}
{\sc N. R. O. Bastos, R. A. C. Ferreira, D. F. M. Torres},
{\it Discrete-time fractional variational problems},
Signal Process. {\bf 91} (2011), no.~3, 513--524.
{\tt arXiv:1005.0252}

\bibitem{withNunoDorota}
{\sc N. R. O. Bastos, D. Mozyrska, D. F. M. Torres},
{\it Fractional derivatives and integrals on time scales
via the inverse generalized Laplace transform},
Int. J. Math. Comput. {\bf 11} (2011), no.~J11, 1--9.
{\tt arXiv:1012.1555}

\bibitem{withNuno:cfc}
{\sc N. R. O. Bastos, D. F. M. Torres},
{\it Combined delta-nabla sum operator in discrete fractional calculus},
Commun. Frac. Calc. {\bf 1} (2010), no.~1, 41--47.
{\tt arXiv:1009.3883}

\bibitem{Bohner}
{\sc M. Bohner},
{\it Calculus of variations on time scales},
Dynam. Systems Appl. {\bf 13} (2004), no.~3-4, 339--349.

\bibitem{El-Nab:T}
{\sc R. A. El-Nabulsi, D. F. M. Torres},
{\it Necessary optimality conditions for fractional action-like
integrals of variational calculus with Riemann-Liouville derivatives
of order $(\alpha,\beta)$},
Math. Methods Appl. Sci. {\bf 30} (2007), no.~15, 1931--1939.
{\tt arXiv:math-ph/0702099}

\bibitem{MR2421931}
{\sc R. A. El-Nabulsi, D. F. M. Torres},
{\it Fractional actionlike variational problems},
J. Math. Phys. {\bf 49} (2008), no.~5, 053521, 7 pp.
{\tt arXiv:0804.4500}

\bibitem{Frederico:Torres1}
{\sc G. S. F. Frederico, D. F. M. Torres},
{\it A formulation of Noether's theorem for fractional problems of the calculus of variations},
J. Math. Anal. Appl. {\bf 334} (2007), no.~2, 834--846.
{\tt arXiv:math/0701187}

\bibitem{MR2433010}
{\sc G. S. F. Frederico, D. F. M. Torres},
{\it Fractional conservation laws in optimal control theory},
Nonlinear Dynam. {\bf 53} (2008), no.~3, 215--222.
{\tt arXiv:0711.0609}

\bibitem{MyID:149}
{\sc G. S. F. Frederico, D. F. M. Torres},
{\it Fractional Noether's theorem in the Riesz-Caputo sense},
Appl. Math. Comput. {\bf 217} (2010), no.~3, 1023--1033.
{\tt arXiv:1001.4507}

\bibitem{Goodrich}
{\sc C. S. Goodrich},
{\it Existence of a positive solution to a system
of discrete fractional boundary value problems},
Appl. Math. Comput. {\bf 217} (2011), no.~9, 4740--4753.

\bibitem{gray}
{\sc H. L. Gray, N. F. Zhang},
{\it On a new definition of the fractional difference},
Math. Comp. {\bf 50} (1988), no.~182, 513--529.

\bibitem{Gusein}
{\sc G. Sh. Guseinov},
{\it Discrete calculus of variations},
in ``Global analysis and applied mathematics'',
170--176, Amer. Inst. Phys., Melville, NY, 2004.

\bibitem{Magin}
{\sc R. L. Magin},
``Fractional calculus in bioengineering'',
Begell House, 2006.

\bibitem{M:T}
{\sc A. B. Malinowska, D. F. M. Torres},
{\it Generalized natural boundary conditions for fractional variational problems
in terms of the Caputo derivative},
Comput. Math. Appl. {\bf 59} (2010), no.~9, 3110--3116.
{\tt arXiv:1002.3790}

\bibitem{MR1676733}
{\sc M. Merkle},
{\it Representations of error terms in Jensen's
and some related inequalities with applications},
J. Math. Anal. Appl. {\bf 231} (1999), no.~1, 76--90.

\bibitem{Mill0}
{\sc K. S. Miller, B. Ross},
{\it Fractional difference calculus},
in ``Univalent functions, fractional calculus,
and their applications (K\=oriyama, 1988)'',
139--152, Horwood, Chichester, 1989.

\bibitem{Mill1}
{\sc K. S. Miller, B. Ross},
``An introduction to the fractional calculus
and fractional differential equations'',
Wiley, New York, 1993.

\bibitem{MyID:163}
{\sc D. Mozyrska, D. F. M. Torres},
{\it Minimal modified energy control for fractional
linear control systems with the Caputo derivative},
Carpathian J. Math. {\bf 26} (2010), no.~2, 210--221.
{\tt arXiv:1004.3113}

\bibitem{Moz:T}
{\sc D. Mozyrska, D. F. M. Torres},
{\it Modified optimal energy and initial memory
of fractional continuous-time linear systems},
Signal Process. {\bf 91} (2011), no.~3, 379--385.
{\tt arXiv:1007.3946}

\bibitem{ort}
{\sc M. D. Ortigueira},
{\it Fractional central differences and derivatives},
J. Vib. Control {\bf 14} (2008), no.~9-10, 1255--1266.

\bibitem{Mach}
{\sc J. Sabatier, O. P. Agrawal, J. A. Tenreiro Machado},
``Advances in fractional calculus'', Springer, Dordrecht, 2007.

\bibitem{samko}
{\sc S. G. Samko, A. A. Kilbas, O. I. Marichev},
``Fractional integrals and derivatives'',
Translated from the 1987 Russian original,
Gordon and Breach, Yverdon, 1993.

\end{thebibliography}
\end{document}